\theoremstyle{plain}
\newtheorem{theorem}{Theorem}[section]
\newtheorem{maintheorem}{Theorem}
\newtheorem{lemma}[theorem]{Lemma}
\newtheorem{proposition}[theorem]{Proposition}
\newtheorem{corollary}[theorem]{Corollary}
\theoremstyle{remark}
\newtheorem{claim}{Claim}[section]
\newcommand{\field}[1]{\mathbb{#1}}
\newcommand{\RR}{\field{R}}
\newcommand{\dpt}{\displaystyle}
\newcommand{\NN}{\field{N}}
\numberwithin{equation}{section}
\begin{document}

\title[Differential equations with pulses]{Differential equations with pulses: \\ existence and stability of periodic solutions}
\author[Alexandre Rodrigues]{Alexandre A. P. Rodrigues\\ Lisbon School of Economics and Management \\ Rua do Quelhas, 6, 1200-781 Lisboa \\Centro de Matem\'atica da Univ. do Porto \\ Rua do Campo Alegre, 687,  4169-007 Porto,  Portugal }
\address{Alexandre Rodrigues \\ Centro de Matem\'atica da Univ. do Porto \\ Rua do Campo Alegre, 687 \\ 4169-007 Porto   \\ Lisbon School of Economics and Management \\ Rua do Quelhas, 6, 1200-781 Lisboa \\ Portugal}
\email{alexandre.rodrigues@fc.up.pt}

\date{\today}

\thanks{AR was partially supported by CMUP, which is financed by national funds through FCT -- Fundação para a Ci\^encia e Tecnologia, I.P., under the project with reference UIDB/00144/2020.}

\subjclass[2010]{34A37; 34D23; 37C75; 34C25; 37G15 \\
\emph{Keywords:}  Differential equation with pulses, Instantaneous impulses, periodic solutions, stability}

\begin{abstract}

We consider generic differential equations in $\RR$ with a finite number of hyperbolic equilibria, which are subject to $\omega$--periodic instantaneous perturbative pulses ($\omega>0$). Using the time-$
\omega$ map of the original system (without perturbation), we are able to find all periodic solutions of the perturbed system and   study their stability. 
In this article, we establish an algorithm to locate $\omega$--periodic solutions of impulsive systems  of frequency $\omega$, to study their stability and to locate Saddle-node bifurcations.
With our technique, we are able to fully characterise the asymptotic dynamics of the system under consideration.
 \end{abstract}

\maketitle
\setcounter{tocdepth}{1}

\section{Introduction}

Differential equations can be used to model the dynamics of many real-world phenomena. Several evolutionary processes are characterised by the fact that,  at certain moments of time, they experience an abrupt change of state (for instance a forest fire  or harvesting can abruptly change a landscape \cite{Shuai_2007}, medication \cite{Huang_2012}, vaccination \cite{Onofrio, Shi_Chen, Shulgin}). These processes are subject to short-term perturbations whose duration is negligible in comparison with the duration of the process. Thus, it is natural to assume that these perturbations are instantaneous. Such systems can be studied with dynamical systems with discontinuous trajectories \cite{Samoilenko_livro, Simeonov_paper88}.

We refer to these systems as \emph{impulsive differential equations} or \emph{differential equations with pulses}: systems of differential equations   coupled with a discrete map to capture the change in state, the so called  ``impulse''. The impulsive condition can be either time or state dependent \cite{Hernandez, Millman60}. The theory of impulsive differential equations has been introduced in the sixties by Mil'man and Mishkis \cite{Millman60}, had a fast development in the eighties and nineties \cite{Bainov_livro, Bainov97, Lakshmikantham_livro, Samoilenko_livro, Simeonov_paper86, Simeonov_paper88} and gives a good description for some real-world processes involving abrupt changes at a given sequence of times.

Impulsive equations play an important role in epidemic models with periodic vaccination  \cite{Onofrio, Shi_Chen, Shulgin} and  may arise in macroeconomics -- for example, the  Solow differential equation \cite{Emmenegger} becomes an impulsive differential equation when shocks to capital intensity are modelled with jumps. 

The most common types of   impulsive differential equations found in the mathematical literature are:
\begin{enumerate}
\item  \emph{Instantaneous impulses}: the duration of these changes is relatively short compared to the overall duration of the whole process. The sequence of times where the impulse occurs may be state dependent \cite{Bainov97, Samoilenko_livro};

\item \emph{Non-instantaneous impulses}: it is also an impulsive action, which starts at an arbitrary fixed point and remains active on a finite time interval. The authors of \cite{Hernandez} introduced this new class of abstract differential equations where the impulses are not instantaneous and explored the existence  of solutions under mild conditions;

\item \emph{Autonomous impulsive equations}:  the solution of a differential equation  $\varphi(t)$ evolves  until it hits a point within a compact subset $M$ of the phase space (if it does at all), at which the operator $I(x), x\in M$, instantaneously transfers $\varphi(t)$ to another point of the phase space \cite{Simeonov_paper88}. \\
\end{enumerate}

In the present article, motivated by impulsive SI/SIR epidemic models \cite{Onofrio, Shi_Chen, Shulgin}, we are interested in differential equations in $\RR$ under the effect if instantaneous impulses occurring at a periodic sequence of times. In \cite{Onofrio, Shulgin}, the impulse plays the role of vaccination effects in the class of Susceptibles.  

\subsection*{Novelty and structure}
In this article, we are going to consider non-linear differential equations in $\RR$ with $\omega$--periodic moments of impulsive effect. We are going to present an (easy) algorithm to find $\omega$--periodic solutions of the differential equation and classify their stability. This algorithm is useful because in $\RR$ we do not need the formalism of a Lyapunov map for impulsive differential equations discussed in Theorem 3.1 of \cite{Li} to deduce about their stability\footnote{Observe that the theory developed by \cite{Li} is valid when the periodic solution is unique.}. 

After the introduction of some basic results on impulsive differential equations in Section \ref{s:preliminaries}, we describe and motivate  the system under consideration in Section \ref{s:object of study}, where we state the main results of the paper. 
In Section \ref{s:periodic solutions} we state useful results to locate $\omega$--periodic solutions through the time-$\omega$ map associated to the unperturbed system. These results are valid for more general instantaneous impulsive differential equations. The only periodic solutions of the system under consideration are $\omega-$periodic.

In Section \ref{s:auxiliary results} we derive  technical results to study the stability of periodic solutions for impulsive systems. The proof of the main result is performed in Section \ref{s:main proof}. Section \ref{s:discussion} concludes this article. We have endeavoured to make a self contained exposition bringing together all topics related to the proofs. We have drawn illustrative figures to make the paper easily readable.

\section{Preliminaries}
\label{s:preliminaries}
  The following section contains some useful information on instantaneous impulsive differential equations may be found in \cite{Dishliev, Lakshmikantham_livro}. See also Chapter 1 of \cite{Agarwal_livro}. 

\subsection{Instantaneous impulsive differential equations}\label{object}

An impulsive differential equation is given by an ordinary differential equation coupled with a discrete map defining the ``jump'' condition. The law of evolution of the process is described by the differential equation
 $$
 \frac{dx}{dt}=f(t,x)
 $$
where $t\in \RR$, $x\in \Omega \subset \RR^n$ and $f: \RR\times \Omega\rightarrow \Omega $ is $C^1$.
The instantaneous impulse at time $t$ is defined by the map $I(t,x): \RR\times \Omega \rightarrow \Omega$ given by
$$
I(t,x)=x+I(t,x).
$$

Throughout this article, we focus on the Initial Value Problem:
 \begin{eqnarray}
 \label{prel1}
 \frac{dx}{dt}&=&f(t,x) \qquad t\neq T_k \\ 
 \Delta x(T_k) &=& I_k(x)  \qquad t=T_k,\quad  k\in \NN_0   \nonumber \\
 x(0)&=&x_0   \nonumber 
\end{eqnarray}
where $x\in \Omega\subset \RR$,  the impulse is fixed at a sequence $(T_k)_{k\in \NN_0}$  such that $T_0=0$,  $$\forall k\in \NN, \quad T_k < T_{k+1} \qquad \text{and}\qquad \lim_{k\in \NN}T_k=+\infty,$$
and, as depicted in Figure \ref{impulsive7}, the  instantaneous ``jump'' at $t=T_k$ is defined by:
$$
\Delta x(T_k):= \lim_{t\rightarrow T_k^+}\varphi(t,x)-  \lim_{t\rightarrow T_k^-}\varphi(t,x). 
$$

 \begin{figure}
\begin{center}
\includegraphics[height=4.5cm]{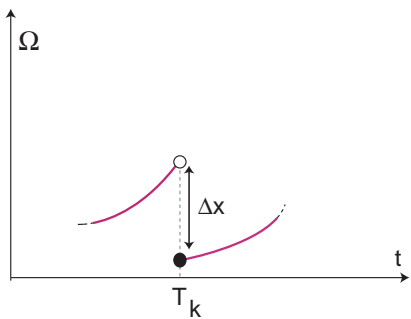}
\end{center}
\caption{\small The ``jump'' at $t=T_k$ is defined by $\Delta x(T_k):=\dpt  \lim_{t\rightarrow T_k^+}\varphi(t,x)-  \lim_{t\rightarrow T_k^-}\varphi(t,x)$, with $\Omega=\RR_0^+$.}
 \label{impulsive7}
\end{figure}

For $k\in \NN$ and $t  \in [T_k,T _{k+1})$ the solution $\varphi(t)$ of equation \eqref{prel1} satisfies the equation $\dpt \frac{dx}{dt}=f(t,x)$ and for $t\geq T_k$, $\varphi(t)$ satisfies the equality $$\lim_{t\rightarrow T_k^+}\varphi(t,x)= \lim_{t\rightarrow T_k^-}\varphi(t,x) + I_k\left(\lim_{t\rightarrow T_k^-}\varphi(t,x)\right),$$
where $x\in \Omega$.  The next result concerns the existence of a unique solution for  \eqref{prel1}.
\bigbreak
\begin{theorem}[\cite{Bainov_livro, Lakshmikantham_livro}, adapted]
Let the function $f : \RR\times \Omega \rightarrow \Omega$ be continuous in the sets $[T_k, T_{k+1}[ \times \Omega$, where $k\in \NN$. For each $k \in \NN$ and $x \in \Omega$, suppose there exists (and is finite) the limit of $f(t,y)$ as $(t,y)\rightarrow (T_k,x)$, where $ t >T_k$. Then, for each $(t_0, x_0)\in \RR\times \Omega$ there exist $\beta>t_0$ and a solution $\varphi: \, ]t_0, \beta[\rightarrow \RR$ of the initial value problem \eqref{prel1}. If $f$ is $C^1$ with respect to $x$ in $\RR\times \Omega$, the solution is unique.
\end{theorem}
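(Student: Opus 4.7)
The plan is to reduce the impulsive IVP to the classical (non-impulsive) existence theorem of Peano (and of Picard--Lindel\"of for uniqueness) applied separately on the strips $[T_k,T_{k+1})\times\Omega$ on which $f$ is continuous, and then to paste the pieces together through the discrete impulse map $I_k$.

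First, I would identify the index $k_0\in\NN_0$ such that $t_0\in[T_{k_0},T_{k_0+1})$. On this strip $f$ is continuous by hypothesis; moreover, the assumption that $\lim_{(t,y)\to(T_{k_0},x)}f(t,y)$ exists and is finite for $t>T_{k_0}$ yields a continuous right-extension of $f$ to a neighbourhood of $(t_0,x_0)$, so that the classical Peano existence theorem applies to the ordinary differential equation $\tfrac{dx}{dt}=f(t,x)$ with datum $\varphi(t_0^+)=x_0$. This furnishes a solution $\varphi:(t_0,\beta)\to\RR$ for some $\beta\leq T_{k_0+1}$, which is precisely the conclusion of the existence assertion.

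If one wishes to continue the solution beyond $T_{k_0+1}$, I would first prolong $\varphi$ up to $T_{k_0+1}$ (as long as it stays in $\Omega$ and is bounded), take the finite left-limit $\varphi(T_{k_0+1}^-)$, and then use the jump condition to define
\[
\varphi(T_{k_0+1}^+) \;:=\; \varphi(T_{k_0+1}^-)\;+\;I_{k_0+1}\!\left(\varphi(T_{k_0+1}^-)\right),
\]
which serves as the initial value for a second application of Peano on $[T_{k_0+1},T_{k_0+2})\times\Omega$, the right-limit hypothesis at $T_{k_0+1}$ again supplying the required continuity on the right. Iterating this procedure inductively produces a piecewise-$C^1$ trajectory with the prescribed jumps at each $T_k$.

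For uniqueness under the $C^1$ regularity of $f$ in $x$, Picard--Lindel\"of on each strip provides local Lipschitz control, which forces two solutions with the same initial datum to coincide on $[t_0,T_{k_0+1})$; they therefore share the same left-limit at $T_{k_0+1}$, hence the same post-impulse value, and the argument is closed by induction on $k$. The main (and essentially only) subtle point is to notice that the assumed right-limit of $f$ at each $T_k$ is exactly what legitimises Peano and Picard--Lindel\"of at the \emph{left endpoint} of each strip; no symmetric left-limit hypothesis at $T_{k+1}$ is required, because the IVP is only posed forward in time and because at the right endpoint the discrete map $I_k$, not the vector field $f$, determines the continuation.
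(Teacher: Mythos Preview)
The paper does not prove this theorem at all: it is stated in the preliminaries section with the attribution ``[\cite{Bainov_livro, Lakshmikantham_livro}, adapted]'' and is immediately followed by the next theorem on extendability, with no proof in between. So there is nothing to compare against in the paper itself.

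Your sketch is the standard argument one finds in the cited references: apply Peano (respectively Picard--Lindel\"of) on each strip $[T_k,T_{k+1})\times\Omega$ where $f$ is continuous, use the right-limit hypothesis to legitimise the local existence theorem at the left endpoint $T_k$, and glue pieces together via the impulse map $I_k$. This is correct and is exactly the route taken in the monographs the paper cites. One minor remark: for the bare existence claim as stated (``there exist $\beta>t_0$ and a solution on $]t_0,\beta[$'') you do not even need the inductive continuation across impulse times---a single application of Peano on the strip containing $t_0$ already yields some $\beta\le T_{k_0+1}$, which is all the theorem asserts. Your further discussion of continuation beyond $T_{k_0+1}$ is really a proof of the \emph{next} theorem in the paper (Theorem~\ref{Theorem2.1}) rather than of this one, but it does no harm to include it.
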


  The following theorem imposes conditions where  the solution $\varphi$ may be \emph{extendable}.

\begin{theorem}[\cite{Bainov_livro, Lakshmikantham_livro}, adapted]
\label{Theorem2.1}
Let the function $f : \RR\times \Omega \rightarrow \Omega$ be continuous in the sets $[T_k, T_{k+1}[ \times \Omega$, where $k\in \NN$. For each $k \in \NN$ and $x \in \Omega$, suppose there exists (and is finite) the limit of  $f(t,y)$ as $(t,y)\rightarrow (T_k,x)$, where $ t >T_k$. If $\varphi: \, ]\alpha, \beta[ \rightarrow \RR$ is a solution of \eqref{prel1}, then the solution is extendable to the right of $\beta$ if $\beta\neq T_k$ if and only if $$\dpt \lim_{t\rightarrow \beta^-}\varphi(t)=\eta$$ and one of the following conditions holds:
\begin{enumerate}
\item $\beta\neq T_k$, for any $k\in \NN_0$ and $\eta \in \Omega$;
\item $\beta = T_k$, for some $k\in \NN_0$ and $\eta + I_k(T_k, \eta) \in \Omega$.
\end{enumerate}

\end{theorem}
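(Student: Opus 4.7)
The plan is to split the argument at the terminal time $\beta$ into the two cases announced in the statement, and for each of them reduce the problem of extendability of an impulsive solution to classical local existence for an ordinary differential equation in which the right-hand side has a (one-sided) continuous extension. I would treat sufficiency first and then necessity.

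For sufficiency, I would argue as follows. In case (1), $\beta\in (T_{k_0},T_{k_0+1})$ for some $k_0\in\NN_0$ (or $\beta<T_0$, handled identically), so on a neighbourhood of $(\beta,\eta)$ contained in $[T_{k_0},T_{k_0+1})\times\Omega$ the map $f$ is continuous, and moreover $\eta\in\Omega$ is interior to the $t$-strip where no impulse occurs. Setting $\varphi(\beta):=\eta$ produces a continuous extension of $\varphi$ to $(\alpha,\beta]$ that still solves $\dot x=f(t,x)$ on $[T_{k_0},\beta]$. Applying the classical Peano / Picard--Lindel\"of theorem to the initial value problem $\dot y=f(t,y)$, $y(\beta)=\eta$, yields a solution on some $[\beta,\beta+\delta)$; concatenating it with $\varphi$ and verifying that the match at $\beta$ is smooth (in the sense of the ODE, since $\beta$ is not an impulse time) gives the desired extension. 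In case (2), $\beta=T_k$ for some $k$, and the definition of the impulsive solution in \eqref{prel1} forces the post-jump value $\lim_{t\to T_k^+}\varphi(t)=\eta+I_k(T_k,\eta)$. Because this value belongs to $\Omega$ by hypothesis and $f$ admits by assumption a finite limit at $(T_k,\eta+I_k(T_k,\eta))$ from the right, the right-hand side of $\dot x=f(t,x)$ admits a continuous extension to $[T_k,T_{k+1})\times\Omega$ near this point; the same classical existence theorem then produces a solution on $[T_k,T_k+\delta)$, which we concatenate with $\varphi$ to obtain the extension.

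For necessity, assume $\varphi$ admits an extension $\tilde\varphi$ to $(\alpha,\beta+\delta)$ for some $\delta>0$. On $(\beta-\varepsilon,\beta]$ (for $\varepsilon$ small enough so that no impulse time lies in this interval except possibly $\beta$ itself), $\tilde\varphi$ satisfies the ODE and is therefore continuous from the left at $\beta$; this proves that $\lim_{t\to\beta^-}\varphi(t)$ exists and equals some $\eta\in\RR$. In case (1), since $\tilde\varphi$ takes values in $\Omega$ we must have $\eta\in\Omega$. In case (2), by \eqref{prel1} the value $\tilde\varphi(T_k^+)=\eta+I_k(T_k,\eta)$ must again belong to $\Omega$, since $\tilde\varphi$ is a solution and $\Omega$ is the prescribed phase space.

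The main obstacle I expect is purely bookkeeping rather than conceptual: one must be careful that the one-sided limit hypothesis on $f$ at the impulse times is exactly what is required so that the extended vector field on $[T_k,T_{k+1})\times\Omega$ is continuous at the left endpoint of the strip, making the classical ODE existence theorem applicable to the post-jump initial condition. The rest of the argument is a direct patching: no new dynamical information is produced beyond that already guaranteed by the standard (non-impulsive) existence theorem invoked on each strip $[T_k,T_{k+1})$.
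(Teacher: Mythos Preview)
The paper does not supply its own proof of this theorem: it is stated as a quoted result, adapted from the cited monographs of Bainov--Simeonov and Lakshmikantham--Simeonov, and the text moves on immediately to the integral representation \eqref{general_solution} without any argument. There is therefore nothing in the paper to compare your proposal against.

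That said, your sketch is the standard one and matches what one finds in those references: reduce extendability at $\beta$ to a classical local existence problem on the strip $[T_{k_0},T_{k_0+1})\times\Omega$, using the one-sided limit hypothesis on $f$ to guarantee that the right-hand side extends continuously at the left edge of the strip so that Peano (or Picard--Lindel\"of, when $f$ is Lipschitz in $x$) applies to the post-jump initial condition. One small point worth tightening in your necessity direction: to conclude $\eta\in\Omega$ in case~(1) you implicitly use that the extended solution is continuous at $\beta$ and that $\Omega$ is the phase space in which solutions are required to live; if $\Omega$ is open this is immediate, but if $\Omega$ is merely a subset of $\RR^n$ you should say explicitly that $\tilde\varphi(\beta)=\eta$ and $\tilde\varphi$ takes values in $\Omega$ by definition of solution. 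Otherwise the argument is sound.
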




Under the conditions of the Theorem \ref{Theorem2.1},  for each $(t_0 , x_0 ) \in \RR \times \Omega$, there exists a
unique solution $\varphi(t, x_0)$ of   \eqref{prel1} defined in $\RR$ (\cite{Bainov_livro, Lakshmikantham_livro}) which may be written as:
\begin{equation}
\label{general_solution}
\varphi(t, x_0)= \left\{
\begin{array}{l}
\dpt x_0 +\int_{0}^t f(s, \varphi(s, x_0)) \text{ds} + \sum_{0<T_k<t}I_k(\varphi(T_k, x_0)) \quad {\text{for}}\quad t\in [0, b[\\ \\
\dpt x_0 +\int_{0}^t f(s, \varphi(s, x_0)) \text{ds} + \sum_{t<T_k<0}I_k(\varphi(T_k, x_0))  \quad {\text{for}}\quad t\in (a, 0]\\ \\
\end{array}
\right.
\end{equation}

\subsection{Periodic solutions and stability}
\label{ss: definitions}
The following definitions have been adapted from \cite{Milev90, Simeonov_paper86, Simeonov_paper88}.
For $\omega>0$, we say that $\varphi$ is a $\omega$--periodic solution of \eqref{prel1} if and only if there exists $x_0\in \Omega$ such that 
\begin{equation}
\label{periodic_def}
\forall  t\in \RR, \quad \varphi (t, x_0)= \varphi (t+\omega, x_0).
\end{equation}
We disregard constant solutions and we consider $\omega$ as the smallest positive value for which \eqref{periodic_def} holds.
Let $x_0 \in \Omega$ be such that $\varphi(t, x_0 )$ is a  $\omega$--periodic solution of \eqref{prel1}.  We say that $\varphi(t, x_0)$  is: \\
\begin{enumerate}
\item \emph{stable} if for any neighbourhood $V$ of $x_0$, there is another neighbourhood $W\subset V$ of $x_0$ such that for all $y_0 \in W$ and for all $k\in \NN$, $\varphi(k\omega,y_0 )\in V$;\\
\item \emph{asymptotically stable} if it is stable and there exists a neighbourhood
$V$ of $x_0$ such that for any $y_0 \in V$ and $\dpt \lim_{k\rightarrow +\infty} \varphi(k\omega, y_0 )= x_0$; \\
\item \emph{unstable} if it is not stable. \\
\end{enumerate}

\section{Setting and main result}
In this section, we enumerate the main assumptions concerning the impulsive differential equation under consideration and we state the main result of this article. 
\label{s:object of study}
\subsection{Object of study}
Consider the following Initial Value Problem in $\RR_0^+$:
\begin{equation}
\label{3.1}
\left\{
\begin{array}{l}
\dot{x}=h(x)  \\ \\
 \varphi(0)=x_0 \in \RR^+_0
\end{array}
\right.
\end{equation}
whose flow is given by $\varphi(t, x)$ with $x\in \RR_0^+$ and $t\in  \RR$. The vector field $h:\RR^+_0  \rightarrow \RR^+_0$: \\ 

\begin{itemize}
\item[\textbf{(P1)}]  is $C^2$--smooth and may be written as $h(x)=Ax + H(x)$ with $A=dh(0)\neq  0$ (linear part) and $0\neq H(x)=\mathcal{O}(x^2)$, where $\mathcal{O}$ represents the usual Landau notation;\\ 
\item[\textbf{(P2)}]  has $k$ zeros, $k\in \NN$.\\
  
\end{itemize}

Without loss of generality, we are going to consider $A=dh(0)<0$ and an odd number of equilibria ($k=2n-1, n\in \NN$) -- see Figure \ref{impulsive1}. Let us denote and order the equilibria whose existence is guaranteed by \textbf{(P2)}, in the following way:
$$
0= X^s_1<X^u_1<X^s_2<X^u_2< \ldots <X^s_n.
$$ 
\bigbreak

For $\omega>0$ fixed (once and for all), define $T_k= k \omega$, $k\in \NN$, and the following Initial Value Problem (motivated by \cite{Onofrio, Shulgin}): \\
\begin{equation}\label{general}
\left\{
\begin{array}{l}
\dot{x}=h(x)  \\  \\
\dpt   \varphi(T_k,x)= (1+\lambda) \lim_{t\rightarrow T_k^-} \varphi(t,x) \qquad \text{(pulse)} \\ \\
\varphi(0, x_0)=x_0 \in \RR^+_0
\end{array}
\right.
\end{equation}
where\\

\begin{itemize}
 \item[\textbf{(P3)}] $\lambda \in \, ]-1, +\infty[$. \\
 \end{itemize}
 
\begin{figure}
\begin{center}
\includegraphics[height=4.5cm]{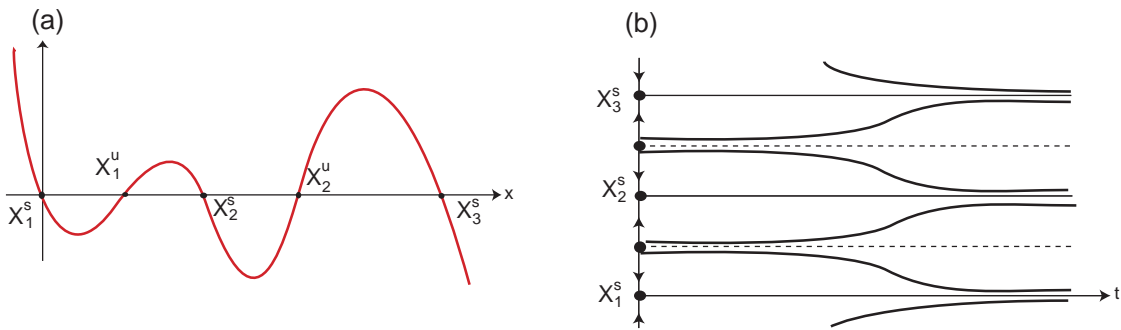}
\end{center}
\caption{\small  (a): Possible vector field $h$ of  \eqref{3.1}, where $ A<0$, $k=5$ and $n=3$ (see \textbf{(P1)--(P2)}). (b): Sketch of the graph of the solutions of \eqref{3.1} as function of $t\in \RR_0^+$. }
 \label{impulsive1}
\end{figure}



 The flow associated to the differential equation \eqref{general} is given by $\varphi_\lambda(t,x)$ where  $\varphi_0(t,x)\equiv \varphi(t,x)$,  $x\in \RR_0^+$ and $t\in \RR$. 
As depicted in Figure \ref{impulsive2}(a), let us denote by $ R_\omega(x)$ the time-$\omega$ map associated to   \eqref{3.1}:
 $$
 R_\omega(x) = \varphi_0(\omega,x).
 $$
 
 \bigbreak
 \bigbreak
 We also assume that:\\
 
 \begin{itemize}
\item[\textbf{(P4)}]   For all $j\in \{1, ..., n\}$, the map $R_\omega$ is convex for $x \in\, \,  ]X_j^s, X_j^u[ $ and concave for $x \in\, \,  ]X_j^u, X^s_{j+1}[$.\\
  \end{itemize}
 The following hypothesis, although not essential to prove the main result, simplifies the proof of our main result.  See the discussion at Section \ref{s:discussion}.\\
 \begin{itemize}
 \item[\textbf{(P5)}]  For all $j\in \{1, ..., n\}$, there exist a unique pair $(m_j, M_j) $ such that $m_j, M_j\in [X^s_j, X^s_{j+1}]$  and \\ $$\dpt \frac{R_\omega(m_j)}{m_j}=\min_{x\in [X^s_j, X^s_{j+1}]}\frac{R_\omega(x)}{x}=\beta_j<1\,, \qquad  \frac{R_\omega(M_j)}{M_j}=\max_{x\in [X^s_j, X^s_{j+1}]}\frac{R_\omega(x)}{x}=\gamma_j>1.$$ \\
 \end{itemize}

For $\lambda>-1$, the impulse map $x_0 \mapsto (1+\lambda) x_0$ is injective and increasing, which means that two solutions cannot cross/overlap after a pulse. 
 There exists a unique global solution $\varphi_\lambda(t, x_0)$ of \eqref{general} such that the operator $$(x_0, \lambda) \mapsto \varphi_\lambda(t, x_0)$$ is continuous with respect to $x_0$ and $\lambda$ (see Section 1 of \cite{Dishliev}) and may be written  explicitly  by (see \eqref{general_solution}):
$$
\varphi_\lambda(t, x_0) = x_0 +\int_{0}^t h(\varphi_0(s, x_0))ds + \sum_{ 0<T_k\leq t} \lambda  \left(\lim_{t\rightarrow T_k^-}\varphi_\lambda(t,x_0)\right), \qquad t\geq 0.
$$

 \bigbreak

\subsection{Main result}

\begin{figure}
\begin{center}
\includegraphics[height=6.5cm]{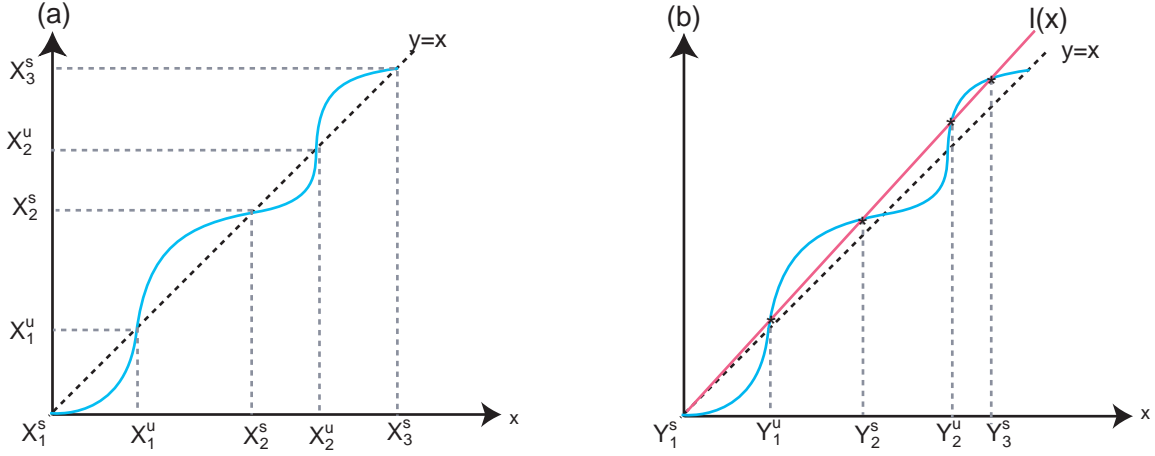}
\end{center}
\caption{\small  (a) Sketch of the map $R_\omega$ for $A<0$, $k=5$ and $x\in \RR_0^+$. (b) Illustration of the the intersection points of $R_\omega$ and $\ell$ (Item (1) of Theorem \ref{ThA}). Each intersection point, marked with $\star$, corresponds to an equilibrium of \eqref{general}.  }
 \label{impulsive2}
\end{figure}

\begin{maintheorem}
\label{ThA}
Consider the system (\ref{general}) satisfying Properties \textbf{(P1)--(P5)}. The following assertions hold:\\
\begin{enumerate}
\item The number of $\omega$--periodic orbits varies with $\lambda$ and corresponds to the number of the intersection points of the graphs of $R_\omega$ and  $\dpt \ell(x):= \frac{x}{(\lambda+1)} $.\\
\item The $\lambda$--values for which we may find  $\omega$--periodic solutions depend on $x\in \RR^+_0$ and are given  using the map $\lambda=g(x)$ where  
\begin{equation}
\label{def: g}
g(x):=\frac{x}{R_\omega(x)}-1, \qquad x>0.
\end{equation}
\item For each $x_0\in \RR^+$, it is possible to find $\lambda>-1$ for which  (\ref{general}) has a $\omega$--periodic solution associated to $x_0$.\\
 
\item Let $\varphi_\lambda $ be a $\omega$--periodic point  of \eqref{general} associated to the initial condition $x_0\in \RR$.\\
\begin{enumerate}
\item If $g'(x_0)>0$, then $\varphi_\lambda(t, x_0)$ is asymptotically stable.
\item  If $g'(x_0)<0$, then $\varphi_\lambda(t, x_0)$ is unstable.\\
\end{enumerate}

 \end{enumerate}
\end{maintheorem}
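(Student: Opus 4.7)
The strategy is to reduce the analysis of (\ref{general}) to the iteration of a one-dimensional return map, after which all four items fall out of elementary calculus. I will introduce the \emph{impulsive stroboscopic map}
\[
P_\lambda(x) \;:=\; (1+\lambda)\,R_\omega(x),
\]
which records the post-pulse state at $t=\omega$. By uniqueness of the solution of (\ref{general}) and the $\omega$-periodicity of both $h$ and the pulse law, iterating yields $\varphi_\lambda(k\omega,x_0)=P_\lambda^{k}(x_0)$ for every $k\in\NN$, so $\varphi_\lambda(\cdot,x_0)$ is $\omega$-periodic if and only if $x_0$ is a fixed point of $P_\lambda$. The equation $P_\lambda(x_0)=x_0$ rewrites as $R_\omega(x_0)=x_0/(1+\lambda)=\ell(x_0)$, proving item (1); solving the same equation for $\lambda$ yields $\lambda=x_0/R_\omega(x_0)-1=g(x_0)$, which is item (2). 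For item (3), since $\RR_0^+$ is positively invariant under the unperturbed flow ($0$ is an equilibrium by \textbf{(P2)}), one has $R_\omega(x_0)>0$ whenever $x_0>0$, hence $g(x_0)>-1$.

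For item (4), I will identify stability of the $\omega$-periodic solution in the sense of Section \ref{ss: definitions} with stability of the fixed point $x_0$ of the scalar map $P_\lambda$; the identification is essentially immediate, since the paper's definition is itself stroboscopic, involving only the iterates $\varphi_\lambda(k\omega,\cdot)=P_\lambda^{k}(\cdot)$. Because $\dot{x}=h(x)$ is a one-dimensional autonomous ODE and distinct trajectories cannot cross, $R_\omega$ is a strictly increasing $C^1$-diffeomorphism of $\RR_0^+$, so $P_\lambda'(x_0)>0$. Differentiating (\ref{def: g}) and using the fixed-point relation $(1+\lambda)R_\omega(x_0)=x_0$ gives
\[
g'(x_0)\;=\;\frac{R_\omega(x_0)-x_0\,R_\omega'(x_0)}{R_\omega(x_0)^{2}}\;=\;\frac{1-P_\lambda'(x_0)}{R_\omega(x_0)},
\]
so $g'(x_0)$ and $1-P_\lambda'(x_0)$ share their sign. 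Consequently $g'(x_0)>0$ is equivalent to $0<P_\lambda'(x_0)<1$, which by the classical hyperbolic fixed-point theorem for smooth one-dimensional maps yields asymptotic stability; while $g'(x_0)<0$ is equivalent to $P_\lambda'(x_0)>1$, yielding instability.

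The only non-algebraic step is the sign computation above; everything else is bookkeeping around the return map $P_\lambda$. I expect the most delicate point to be a clean verification that $R_\omega$ is a globally well-defined, orientation-preserving $C^1$-diffeomorphism of $\RR_0^+$ (using positive invariance and smooth dependence on initial data), so that $P_\lambda'(x_0)>0$ holds globally and the equivalence $|P_\lambda'(x_0)|<1\Longleftrightarrow 0<P_\lambda'(x_0)<1$ is unconditional. Hypotheses \textbf{(P4)} and \textbf{(P5)} play no role in the proof of Theorem \ref{ThA} itself; they are reserved for counting how many intersections of $R_\omega$ with $\ell$ occur as $\lambda$ varies, and for locating the saddle-node bifurcations, which correspond exactly to the critical points of $g$.
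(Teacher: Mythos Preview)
Your proof is correct. Items (1)--(3) coincide with the paper's argument up to notation: the paper does not name the stroboscopic map $P_\lambda(x)=(1+\lambda)R_\omega(x)$ explicitly, but the computation is identical.

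For item (4) your route genuinely differs. The paper computes the same derivative $g'(x_0)=\bigl(R_\omega(x_0)-x_0R_\omega'(x_0)\bigr)/R_\omega(x_0)^2$, but then invokes Lemma~\ref{lemma_sign} (which is where \textbf{(P4)}--\textbf{(P5)} enter) to locate $x_0$ in one of the intervals $]M_j,m_{j+1}[$, and afterwards argues stability by hand: from $g$ increasing near $x_0$ one gets $(1+\lambda^\star)R_\omega(x)<x$ for $x>x_0$, hence by Proposition~\ref{lemma_monotony} the iterates $\varphi_\lambda(T_k,x)$ decrease monotonically and are trapped above $x_0$, so they converge to $x_0$. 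You instead observe directly that $g'(x_0)=\bigl(1-P_\lambda'(x_0)\bigr)/R_\omega(x_0)$ and apply the classical hyperbolic fixed-point criterion for one-dimensional maps, using that $R_\omega$ is an orientation-preserving diffeomorphism so $P_\lambda'>0$. Your argument is shorter and, as you note, makes \textbf{(P4)}--\textbf{(P5)} irrelevant to Theorem~\ref{ThA} proper; the paper itself concedes in Section~\ref{s:discussion} that these hypotheses only simplify matters. What the paper's longer route buys is the explicit partition of $\RR^+$ into the intervals $]m_j,M_j[$ and $]M_j,m_{j+1}[$ on which the sign of $g'$ is constant, which is what feeds the bifurcation picture in Table~1 and Figure~\ref{impulsive4} and pins down the saddle-node values $\lambda=1/\gamma_j-1$, $\lambda=1/\beta_j-1$.
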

An illustration of Item (1) of Theorem \ref{ThA} may be seen in Figure \ref{impulsive2}(b) for $A<0$ and $k=5$. This result can be considered as a \emph{cinematic technique} to locate $\omega$--periodic solutions of \eqref{general} as function of $\lambda$ and $\omega$.
For $j\in \{1, ..., n\}$, solutions of  (\ref{general})  are denoted by $Y^s_j$ or $Y^u_j$ according to their stability, $s$ for stable and $u$ for unstable, where $X_1^s=Y_1^s=0$.
 The proof of Theorem \ref{ThA} is done in Section \ref{s:main proof}.\\

    The following corollary follows  from a combination of Theorem \ref{ThA}  and the analysis of $R_\omega$ for different  values of $\omega>0$, and may be useful in several applications (for instance, in epidemic models when we want to make disappear an endemic attracting periodic solution). \\

 \begin{corollary}
 \label{corol}
 Let $\omega_1>0$ and $\lambda\neq 0$.  If $Y^\star\neq 0$ is a $\omega_1$--periodic solution of \eqref{general} associated to $x_0\in \RR\backslash\{0\}$, then  there exists $\omega_2\in \, ]0, \omega_1[$ for which  $Y^\star$ no longer belong to  a periodic solution. In addition, for this $\omega_2$--value, $\varphi_\lambda(t,Y^\star)$ either converges to $X_1^s\equiv 0$ or diverges. 
 \end{corollary}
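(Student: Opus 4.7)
My plan is to reduce the corollary to a statement about the Poincar\'e map $P_\omega(x) := (1+\lambda) R_\omega(x)$, whose fixed points are, by Theorem \ref{ThA}(1), exactly the $\omega$--periodic points of \eqref{general}. Two facts will do the heavy lifting: (i) $\omega \mapsto R_\omega(x)$ is continuous with $R_0 = \mathrm{id}$, so $g_\omega(x) \to 0$ as $\omega \to 0^+$, uniformly on compact subsets of $\RR^+$; and (ii) $R_{\omega_2}$ is the time-$\omega_2$ flow of the scalar $C^2$ ODE in \eqref{3.1}, hence strictly monotone increasing, so $P_{\omega_2}$ is strictly monotone increasing on $\RR^+_0$.

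For the existence of $\omega_2$, I would argue as follows. By Theorem \ref{ThA}(2), the assumption that $Y^\star$ is $\omega_1$--periodic translates into $g_{\omega_1}(Y^\star) = \lambda$. Since $\lambda \neq 0$ and $\omega \mapsto g_\omega(Y^\star)$ is continuous with $\lim_{\omega \to 0^+} g_\omega(Y^\star) = 0$, there is a whole interval $(0, \omega^\ast)$ on which $g_\omega(Y^\star) \neq \lambda$. Any $\omega_2 \in (0, \min\{\omega^\ast, \omega_1\})$ then satisfies $g_{\omega_2}(Y^\star) \neq \lambda$, and hence, by Theorem \ref{ThA}(1), $Y^\star$ is not on any $\omega_2$--periodic orbit of \eqref{general}.

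For the asymptotic behaviour, the monotonicity of $P_{\omega_2}$ implies that the forward orbit $\{P_{\omega_2}^k(Y^\star)\}_{k \in \NN}$ is a monotone sequence in $\RR^+_0$, so it either converges to a fixed point of $P_{\omega_2}$ or diverges to $+\infty$. It remains to rule out convergence to a positive fixed point. Shrinking $\omega_2$ further, I would use the uniform convergence $g_\omega \to 0$ on compact subsets of $\RR^+$ to ensure that $g_{\omega_2}(x) \neq \lambda$ on a preassigned interval $[\varepsilon, Y^\star]$ (resp. $[Y^\star, M]$). If $\lambda < 0$, then $P_{\omega_2}(Y^\star) \approx (1+\lambda) Y^\star < Y^\star$ for $\omega_2$ small, so the orbit is strictly decreasing and, since no positive fixed points survive in $(\varepsilon, Y^\star)$ and $\varepsilon$ is arbitrary, it must converge to $X^s_1 \equiv 0$. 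If $\lambda > 0$ the orbit is strictly increasing and, by the same reasoning, must escape every compact $[Y^\star, M]$.

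The main obstacle is the case $\lambda > 0$: past $X^s_n$ the vector field $h$ may grow unboundedly negative, so a fixed point of $P_{\omega_2}$ can persist at arbitrarily large $x$ for every $\omega_2 > 0$, obstructing the naive ``absence of fixed points'' argument. To close this case I would instead extract an explicit expansion estimate: from $R_{\omega_2}(x) = x + \omega_2 h(x) + o(\omega_2)$ uniformly on $[Y^\star, M]$ deduce $P_{\omega_2}(x) \geq (1 + \lambda/2)\,x$ for $\omega_2$ sufficiently small, and iterate to force the orbit past $M$ in finitely many steps; a diagonal choice of $\omega_2$ along $M \to +\infty$ then yields genuine divergence rather than convergence to a far--away fixed point.
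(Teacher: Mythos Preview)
Your core argument---reducing periodicity of $Y^\star$ to $g_{\omega}(Y^\star)=\lambda$ and then invoking $g_\omega\to 0$ as $\omega\to 0^+$ to destroy this equality---is exactly what the paper does. The paper's proof is four sentences: it asserts $R_\omega\to_{C^1}\mathrm{Id}$, hence $g_\omega\to\mathbf{0}$, hence no intersection of $\ell$ and $R_\omega$ survives for small $\omega$. Your version is simply a more explicit unpacking of the same mechanism, supplemented by the monotonicity of $P_{\omega_2}=(1+\lambda)R_{\omega_2}$ (implicit in the paper via Proposition~\ref{lemma_monotony}) to handle the asymptotic claim. So for the first part of the corollary and for the $\lambda<0$ half of the second part, you and the paper are doing the same thing, only you say more.

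Where you diverge from the paper is in worrying about the $\lambda>0$ case, and here your concern is legitimate but your proposed fix is not. The convergence $g_\omega\to 0$ is only uniform on compacta; if $h$ decays superlinearly past $X^s_n$ (e.g.\ polynomial $h$ of odd degree), then for \emph{every} $\omega_2>0$ one has $g_{\omega_2}(x)\to+\infty$ as $x\to\infty$, so a fixed point of $P_{\omega_2}$ persists somewhere in $(Y^\star,\infty)$ and the increasing orbit of $Y^\star$ converges to it rather than diverging. Your ``diagonal choice of $\omega_2$ along $M\to+\infty$'' does not rescue this: it produces a sequence of $\omega_2$'s, not the single $\omega_2$ the statement demands, and for each fixed $\omega_2$ in that sequence the orbit is still trapped below the corresponding fixed point. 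The paper's own proof simply does not engage with this difficulty---it is content to say $g_\omega\to\mathbf{0}$ and declare the corollary proved---so you are being more careful than the source, not less; but your closing paragraph should be dropped rather than repaired, since the obstruction you identified is real and the diagonal manoeuvre cannot circumvent it.
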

 For the sake of completeness, we perform a short proof of Corollary \ref{corol}  in Subsection \ref{Corol_prof}.
  \subsection{Digestive remarks}
We point out some remarks on the dynamics of \eqref{general} and we discuss the geometrical meaning of some hypotheses.   \\
 \begin{enumerate}
 \item First of all, note that \eqref{general} may be seen as a non-smooth perturbation of \eqref{3.1} associated to the parameter $\lambda>-1$.  If $\lambda =0$, then the differential equation  \eqref{general} does not have pulses and all solutions are smooth.\\
 \item For all $\lambda\geq -1$, $x_0=0$ is an equilibrium of \eqref{general}.\\
 \item  If $\lambda =-1$, then all solutions converge to the equilibrium $x_0=0$.\\
 \item Property \textbf{(P4)} may be rephrased as:  for all $j\in \{1, ..., n\}$, the map $R_\omega'$ is increasing for $x \in\, \,  ]X_j^s, X_j^u[ $ and decreasing for $x \in\, \,  ]X_j^u, X^s_{j+1}[$.\\
 \item The geometrical meaning of Property \textbf{(P5)} is the following: on each interval of the type $]X_j^s, X_{j+1}^u[ $, the graph of $R_\omega$ lies between the graphs of the linear maps $\beta_j x$ and $\gamma_j x$ for $j\in \{1,..., n\}$. 
 Hypothesis \textbf{(P5)}  just asks for \textbf{unicity} of $m_j$ and $M_j$; their existence is ensured by Lemma \ref{lemma_gamma} in  Section \ref{s:auxiliary results}.\\
 \item  If \textbf{(P5)} does not hold, then the proof of Theorem \ref{ThA} increases its complexity but  its statement remains qualitatively the same.  \\ 
 
 \item The case where the vector field  $h$  is a  linear map    has been studied by Milev and Bainov \cite{Milev90}. It is proved that stability is easily destroyed under small perturbations of the impulsive coefficients. In \cite{Simeonov_paper88} the authors proved an analogue of the Andronov-Witt Theorem for periodic solutions with an impulsive effect, applied to an oscillator (Example 1) and to a two-dimensional electronic system  (Example 2).
 \end{enumerate}

\section{Periodic solutions of an impulsive system}
\label{s:periodic solutions}

The following result  states a useful technique to locate $\omega$--periodic orbits in impulsive differential equations in $\RR$. \\  

\begin{proposition} 
\label{lemma_monotony}
If $\omega>0$, the following sentences hold for \eqref{general}:\\
\begin{enumerate}
\item For all $x\in \RR^+_0$  $\varphi_\lambda(\omega,x)=x$ if and only if  $\varphi_\lambda(t,x)$ is $\omega$-periodic.\\ 
\item If, for all $x\in \RR^+_0$ $\varphi_\lambda(\omega, x)>x$ then   the sequence $\left(\varphi_\lambda(T_k,x)\right)_{k\in \NN}$ is monotonic increasing.\\
\item If, for all $x\in \RR^+_0$ $\varphi_\lambda(\omega, x)<x$ then   the sequence $\left(\varphi_\lambda(T_k,x)\right)_{k\in \NN}$ is monotonic decreasing.\\

\item If $\Psi$ is a periodic solution of \eqref{general}, then $\Psi$ has period $\omega$.\\

\end{enumerate}
\end{proposition}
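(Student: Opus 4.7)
My plan mirrors the four items, exploiting throughout the autonomy of $h$ together with the $\omega$-periodicity of the pulse times $T_k=k\omega$.

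For item (1), the direction $\Leftarrow$ follows by evaluating \eqref{periodic_def} at $t=0$. For $\Rightarrow$, I would set $\psi(t):=\varphi_\lambda(t+\omega,x)$ and observe that, because $h$ is autonomous and $T_k+\omega=T_{k+1}$, the function $\psi$ satisfies the same impulsive IVP as $\varphi_\lambda(\cdot,x)$ with initial datum $\psi(0)=\varphi_\lambda(\omega,x)=x$. Uniqueness for \eqref{general}, guaranteed by the $C^2$-smoothness of $h$ and the discussion surrounding Theorem \ref{Theorem2.1}, then forces $\psi\equiv\varphi_\lambda(\cdot,x)$.

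For items (2) and (3), the same autonomy and shift-invariance yield the semigroup identity $\varphi_\lambda((k{+}1)\omega,x)=\varphi_\lambda(\omega,\varphi_\lambda(k\omega,x))$. Setting $y_k:=\varphi_\lambda(T_k,x)$, the hypothesis of (2) gives $y_{k+1}=\varphi_\lambda(\omega,y_k)>y_k$ for every $k$, and a trivial induction produces the monotonic increase; item (3) is symmetric.

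For item (4), let $\Psi$ be a non-constant periodic solution of \eqref{general} with smallest period $T>0$, set $x_0:=\Psi(0)$, and introduce the time-$\omega$ map $P_\lambda(x):=\varphi_\lambda(\omega,x)=(1+\lambda)R_\omega(x)$. I would first show that $T$ is an integer multiple of $\omega$ by a discontinuity-set argument: the discontinuities of $\Psi$ lie in $\{k\omega:k\in\ZZ\}$; if $\Psi(k\omega^-)=0$ for some $k$, then $h(0)=0$ together with uniqueness would force $\Psi\equiv 0$, contradicting non-constancy, so the discontinuity set is exactly $\{k\omega\}$, and its invariance under the $T$-shift forces $T=m\omega$ for some $m\in\NN$. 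Next I would observe that $P_\lambda$ is monotonic increasing (the autonomous 1D flow preserves order by uniqueness in $\RR$, and $1+\lambda>0$), so $P_\lambda^m(x_0)=x_0$ forces $P_\lambda(x_0)=x_0$: a strictly monotonic $P_\lambda$-orbit cannot return to its starting point. Item (1) then delivers $\omega$-periodicity.

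The main obstacle is the last step: ruling out genuine higher-period orbits in item (4). The monotonicity of $P_\lambda$ is the decisive ingredient; equivalently, one can invoke items (2)-(3) applied at every $y$ with $P_\lambda(y)\ne y$ to reach the same conclusion. A minor subtlety is the equilibrium $x_0=0$, which is excluded by the non-constancy clause, since the zero solution is disregarded by convention (Section \ref{ss: definitions}).
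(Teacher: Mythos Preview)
Your argument for items (1)--(3) is correct and matches the paper's in spirit: both rely on the autonomy of $h$ together with the $\omega$-periodicity of the impulse times, and the differences are only cosmetic (you phrase things via uniqueness and the semigroup identity $\varphi_\lambda((k{+}1)\omega,x)=\varphi_\lambda(\omega,\varphi_\lambda(k\omega,x))$, while the paper argues interval by interval and uses the order-preservation of $R_\omega$ in the induction step).

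For item (4), both you and the paper first show that the minimal period must be a positive multiple of $\omega$ by exploiting the location of the jumps. The paper does this by noting that periodicity forces $\varphi_\lambda(\omega+P,x_0)\neq\lim_{t\to(\omega+P)^-}\varphi_\lambda(t,x_0)$, so $\omega+P$ must be an impulsive time; you do it by observing that the discontinuity set $\{k\omega\}$ is invariant under the $T$-shift. The paper's proof, however, \emph{stops} at ``$P$ is a positive multiple of $\omega$'' and never excludes $P=2\omega,3\omega,\dots$. Your additional step---that $P_\lambda=(1+\lambda)R_\omega$ is strictly increasing on $\RR_0^+$, so $P_\lambda^m(x_0)=x_0$ forces $P_\lambda(x_0)=x_0$, whence item (1) gives $\omega$-periodicity---is precisely what is needed to close this gap. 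In that sense your proof of (4) is more complete than the paper's. One minor point: your discontinuity-set argument implicitly uses $\lambda\neq 0$ (otherwise there are no jumps at all); this is harmless, since for $\lambda=0$ the system is autonomous on the line and admits no non-constant periodic solution, but it is worth stating.
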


\begin{proof}

   \begin{enumerate}

\item Suppose that  $\varphi_\lambda(\omega, x)=x$, for some $x\in \RR_0^+$. Hence, for $t\in [0, \omega[$ the solution  exists and is well defined:
$$
\varphi_\lambda(t+\omega, x)=\varphi_\lambda(t, x).
$$
Using the impulse map, this   implies that 
 $$
\varphi_\lambda(2\omega, x)=\varphi_\lambda(\omega, x).$$
  Repeating the argument for all intervals of the form $[k\omega, (k+1)\omega)$, $k \in \NN$, it follows that for all $t \in \RR$, we have $\varphi_\lambda(t,x)=\varphi_\lambda(\omega +t,x)$ and therefore $\varphi_\lambda$ is periodic.
    \\ \\

\item Suppose that, for a given $x\in \RR_0^+$, we have $\varphi_\lambda(\omega, x)>x$. We want to prove, by induction over $k\in \NN_0$, that:

\begin{equation}
\label{induction1}
 \varphi_\lambda(T_{k+1}, x) = \varphi_\lambda((k+1)\omega, x) > \varphi_\lambda(k\omega, x)=  \varphi_\lambda(T_{k}, x).\\
\end{equation}
\bigbreak

For $k=0$, by hypothesis, we have $\varphi_\lambda(\omega, x)>x$, which is equivalent to: 
 \begin{eqnarray*}
 & & \varphi_\lambda(T_1, x)> \varphi_\lambda(0, x).\\ 
      \end{eqnarray*}
  Assuming formula \eqref{induction1} is valid for $k$, then:\\
 \begin{eqnarray*}
  \varphi_\lambda(T_{k+2}, x) &\overset{\text{Def. of $T_k$}}=&   \varphi_\lambda((k+2)\omega, x)\\ \\
&=&   \lim_{t\rightarrow(k+2)\omega^-} (1+\lambda) \varphi_\lambda(t,x) \\ \\
  &\overset{\text{Hypothesis}}>&   \lim_{t\rightarrow(k+1)\omega^-}  (1+\lambda)\varphi_\lambda(t,x) \\ \\
    &=& \varphi_\lambda((k+1)\omega, x)\\ \\
&=&    \varphi_\lambda(T_{k+1}, x)
    \end{eqnarray*} 
and we get the result. \\
\item Similar to (2).\\

\item Suppose that $\varphi_\lambda$ is a periodic solution of period $P>0$ associated to the initial condition $x_0\in \RR^+$. By definition, we have:
\begin{equation}
\label{periodic_proof1}
\forall t\in \RR, \quad \varphi_\lambda(\omega,x_0)= \varphi_\lambda(\omega+P,x_0).
\end{equation}
Therefore, we may conclude that:
\begin{eqnarray*}
\varphi_\lambda(\omega,x_0)&\overset{\text{Def.}}=& (1+\lambda) \lim_{t\rightarrow \omega^-}\varphi_\lambda(t,x_0) \\ \\
&\overset{\textbf{(P3)}}\neq& \lim_{t\rightarrow \omega^-}\varphi_\lambda(t,x_0)\\ \\
& =& \lim_{t\rightarrow (P+\omega)^-}\varphi_\lambda(t,x_0)
\end{eqnarray*} 
from where we deduce that:
$$
\varphi_\lambda(\omega+P,x_0) \neq  \lim_{t\rightarrow (P+\omega)^-}\varphi_\lambda(t,x_0),
$$
and thus $P+\omega$ is a positive multiple of $\omega$ because it should be an impulsive time.  This means that $P$ is a positive multiple of $\omega$. 
\end{enumerate}

\end{proof}

If the trajectory associate to $x$ is not constant, then either $\varphi_\lambda(\omega, x)>x$ or $\varphi_\lambda(\omega, x)<x$. In both cases, the sequence $\left(\varphi_\lambda(T_k,x)\right)_{k\in \NN}$ is monotonic. If it is bounded, then it converges either to a fixed point or to a periodic orbit.
In summary, either a non-constant solution of  \eqref{general}  converges to a $\omega$--periodic solution or it diverges.\

\section{Study of stability using the time-$\omega$ map}
\label{s:auxiliary results}
The goal of this section is to state preparatory results that are going to be needed in the proof of  Theorem \ref{ThA}. These auxiliary results characterise the maps $R_\omega$ and $g$.
Before going further, it is important to observe that for  $x_0\in \RR_0^+$ and $\lambda>-1$, we have:
\begin{equation}
\label{important1}
R_\omega(x_0)\overset{\text{Def.}}=\varphi_0(\omega, x_0)= \lim_{t\rightarrow \omega^-}\varphi_\lambda(t, x_0)= \frac{\varphi_\lambda(\omega, x_0)}{1+\lambda}.
\end{equation}
Furthermore,  the following properties are valid for $R_\omega: \RR_0^+\rightarrow \RR_0^+$: \\
\begin{enumerate}
\item It has $k$ fixed points by \textbf{(P2)};
\item It is increasing in $\RR_0^+$, bijective and continuous.
 \item If  $R_\omega(x)\gtrless x$ then for all $k \in \NN$, we have $R_\omega^k(x)\gtrless R_\omega^{k-1}(x)$, for all $k\in \NN$.\\
 \end{enumerate}

\begin{lemma}
\label{lemma5.1}
Under Hypotheses \textbf{(P1)--(P3)}, if $A= dh(0)<0$, then the following properties are valid for \eqref{general}:\\
\begin{enumerate}
 \item $g$ is continuous in $\RR^+$;\\
 \item $\dpt \lim_{x \rightarrow 0}\frac{R_\omega(x)}{x}= \exp(A \omega) $; \\
 \item $\dpt \lim_{x \rightarrow 0}g(x)= \exp(-A \omega)-1>0 $; \\
 \item if $H$ is bounded then $\dpt \lim_{x_0 \rightarrow +\infty} g(x_0)=\exp(-A \omega)-1$.\\
\end{enumerate}
\end{lemma}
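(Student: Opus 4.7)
The plan is to reduce everything to the variation-of-parameters representation of the flow $\varphi_0$ associated with $\dot{x}=Ax+H(x)$. Writing $u(t)=\varphi_0(t,x_0)$, the identity
\begin{equation*}
\varphi_0(t,x_0) \;=\; x_0\,e^{At} \;+\; \int_0^t e^{A(t-s)}\,H\bigl(\varphi_0(s,x_0)\bigr)\,ds
\end{equation*}
will furnish all four statements after appropriate estimates on the integral term. First I would handle item (1): since $h$ is $C^2$, the flow $\varphi_0$ is continuous in $(t,x_0)$, so $R_\omega$ is continuous on $\RR^+_0$; moreover $0$ is an equilibrium of \eqref{3.1}, so by uniqueness $R_\omega(x)>0$ for $x>0$, and the quotient $x/R_\omega(x)-1$ is continuous in $\RR^+$.

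For item (2), I would apply the integral formula at $t=\omega$ and divide by $x_0$ to get
\begin{equation*}
\frac{R_\omega(x_0)}{x_0} \;=\; e^{A\omega} \;+\; \frac{1}{x_0}\int_0^\omega e^{A(\omega-s)}\,H\bigl(\varphi_0(s,x_0)\bigr)\,ds.
\end{equation*}
To control the remainder, I would first use Gronwall's inequality on $[0,\omega]$ (exploiting that $h$ is Lipschitz near $0$) to get a bound of the form $|\varphi_0(s,x_0)|\le C\,|x_0|$ uniformly in $s\in[0,\omega]$, for $x_0$ in a neighbourhood of $0$. Combined with the hypothesis $H(x)=\mathcal{O}(x^2)$ from \textbf{(P1)}, this yields $|H(\varphi_0(s,x_0))|\le C'\,x_0^2$, so the integral above is $\mathcal{O}(x_0^2)$ and the remainder is $\mathcal{O}(x_0)$, giving $\lim_{x_0\to 0}R_\omega(x_0)/x_0=e^{A\omega}$. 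Item (3) is then immediate: by definition of $g$,
\begin{equation*}
\lim_{x\to 0}g(x) \;=\; \lim_{x\to 0}\frac{x}{R_\omega(x)}-1 \;=\; e^{-A\omega}-1,
\end{equation*}
and this is strictly positive because $A<0$ forces $-A\omega>0$.

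For item (4), I would reuse the same integral representation, now under the assumption that $H$ is bounded by some constant $M>0$ on $\RR^+_0$. Since $A<0$, one has $e^{A(\omega-s)}\le 1$ on $[0,\omega]$, so
\begin{equation*}
\left|\frac{R_\omega(x_0)}{x_0}-e^{A\omega}\right| \;\le\; \frac{M\omega}{x_0},
\end{equation*}
which tends to $0$ as $x_0\to +\infty$. Consequently $x_0/R_\omega(x_0)\to e^{-A\omega}$ and $g(x_0)\to e^{-A\omega}-1$. The main obstacle is the estimate behind item (2): one has to justify that solutions starting at small $x_0>0$ remain of order $x_0$ throughout $[0,\omega]$, so that the quadratic bound on $H$ can be activated uniformly; a Gronwall argument on the shifted equation for $v=\varphi_0/x_0$, or an elementary continuation estimate using the Lipschitz constant of $h$ on a fixed neighbourhood of $0$, should settle this cleanly.
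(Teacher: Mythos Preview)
Your proposal is correct and follows essentially the same route as the paper: variation-of-parameters for $\dot x = Ax + H(x)$, division by $x_0$, and the $H(x)=\mathcal{O}(x^2)$ bound for item~(2), followed by the global bound $|H|\le M$ for item~(4). The only notable difference is that you make explicit, via a Gronwall/Lipschitz argument, the estimate $|\varphi_0(s,x_0)|\le C|x_0|$ on $[0,\omega]$ needed to turn $H=\mathcal{O}(x^2)$ into an $\mathcal{O}(x_0^2)$ bound on the integrand; the paper leaves this step implicit when it passes directly from $H(x)/x=\mathcal{O}(x)$ to the integral being $\mathcal{O}(x_0)$, so your version is, if anything, slightly more careful at exactly the point you flagged as the main obstacle.
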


\begin{proof}
\begin{enumerate}
\item  It follows immediately  from the expression of $g$ (see \eqref{def: g}), smoothness of $R_\omega$   and  $R_\omega(x)\neq 0$  for all $x\in \RR^+$. \\
\bigbreak
\item 
By \textbf{(P1)}, the vector field $h$ may be written as $h(x)= A x + H(x)$ where $H(x)=\mathcal{O}(x^2)$. Therefore, for $x_0\in \RR^+$, we may integrate \eqref{general} and we get:
$$
R_\omega(x_0)\overset{\text{Def.}}=\varphi_0(\omega, x_0)= x_0 \exp(A\omega)+\exp(A\omega)\int_0^\omega \exp(-At) H(\varphi_\lambda(t, x_0)) dt
$$
from where we deduce that (for $x_0\neq 0$):
    \begin{eqnarray}
\label{integration1}
\frac{R_\omega(x_0)}{x_0}&=&\frac{\varphi_0(\omega, x_0)}{x_0}  \nonumber\\
&=&  \exp(A\omega)+\exp(A\omega) \int_0^\omega \exp(-A t)  \frac{H(\varphi_\lambda(t, x_0))}{x_0}dt.
    \end{eqnarray}
Again by \textbf{(P1)}, since
 $ \dpt
\frac{H(  x)}{x}=  \mathcal{O}(x),
$
we may conclude that:
$$
\exp(A \omega)\int_0^\omega \exp(-A s) \frac{H(\varphi_\lambda(t, x_0))}{x_0} =  \mathcal{O}(x_0).
$$
Applying this conclusion to \eqref{integration1} we get
$$\dpt \lim_{x \rightarrow 0}  \frac{R_\omega(x)}{x}= \exp(A \omega).$$
\bigbreak
 \item From item (2) and the expression of $g$ (see \ref{def: g}), it is easy to check that:
 $$
\dpt \lim_{x \rightarrow 0}  g(x)= \exp(-A \omega)-1.
$$
\bigbreak
\item Assuming that $H$ is bounded by $M>0$ and  since\footnote{Remind that $A=dh(0)<0$.} $$\forall t\in [0, \omega], \quad \exp(A t)\leq \exp(A \omega),$$ from \eqref{integration1} we may conclude the existence of $k>0$ such that:
$$
0\leq \exp(A \omega) \int_0^\omega \exp(-A s)  \frac{H(\varphi(t, x_0))}{x_0}dt \leq   \frac{kM}{x_0}
$$
and thus $\dpt \lim_{x_0 \rightarrow +\infty}  \frac{R_\omega(x_0)}{x_0}= \exp(A \omega).$

\end{enumerate}
\end{proof}

The following corollary is a direct consequence of Lemma \ref{lemma5.1}, after the change of coordinates $x\mapsto x-X^s_j$, where $j\in \{1, ..., n\}$. 

\begin{corollary}
\label{corollary}
Under Hypotheses \textbf{(P1)--(P3)} on the  equation \eqref{general}, the following assertions hold for $j\in \{1, ...,n \}$:\\
\begin{enumerate}
  \item $\dpt \lim_{x \rightarrow X_j^s}\frac{R_\omega(x)}{x}= \exp(dh(X_j^s) \omega) $; \\
 \item $\dpt \lim_{x \rightarrow X_j^s}g(x)= \exp(-dh(X_j^s) \omega)-1>0 $. \\ \\
 
\end{enumerate}
\end{corollary}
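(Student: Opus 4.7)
The plan is to reduce each case to Lemma \ref{lemma5.1} by translating the equilibrium $X_j^s$ to the origin. Fix $j \in \{1, \ldots, n\}$ and set $y := x - X_j^s$; in this coordinate the ODE $\dot x = h(x)$ becomes $\dot y = \tilde h(y)$ where $\tilde h(y) := h(y + X_j^s)$. Since $X_j^s$ is a zero of $h$, $\tilde h(0) = 0$; and because the $2n-1$ equilibria arranged as $0 = X_1^s < X_1^u < X_2^s < \cdots < X_n^s$ alternate in stability with $X_1^s$ stable (as $A = dh(0) < 0$), each $X_j^s$ is a stable hyperbolic zero. Thus $\tilde A := d\tilde h(0) = dh(X_j^s) < 0$, and writing $\tilde h(y) = \tilde A\, y + \tilde H(y)$ with $\tilde H(y) = \mathcal{O}(y^2)$, the translated vector field satisfies \textbf{(P1)}--\textbf{(P3)} with $\tilde A < 0$, so Lemma \ref{lemma5.1} applies verbatim.

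Let $\tilde \varphi_0$ denote the flow of $\dot y = \tilde h(y)$. By uniqueness of solutions $\tilde \varphi_0(t, y) = \varphi_0(t, y + X_j^s) - X_j^s$, so the time-$\omega$ map and the auxiliary function of the shifted system read
\[
\tilde R_\omega(y) = R_\omega(y + X_j^s) - X_j^s, \qquad \tilde g(y) = \frac{y}{\tilde R_\omega(y)} - 1.
\]
Items (2) and (3) of Lemma \ref{lemma5.1} applied to the shifted system give
\[
\lim_{y \to 0} \frac{\tilde R_\omega(y)}{y} = \exp(\tilde A\, \omega), \qquad \lim_{y \to 0} \tilde g(y) = \exp(-\tilde A\, \omega) - 1 > 0,
\]
and substituting back $y = x - X_j^s$ and $\tilde A = dh(X_j^s)$ yields the two limits announced in the corollary, with $R_\omega$ and $g$ read as the quantities built around the equilibrium being approached (for $j = 1$ the two readings coincide, since $X_1^s = 0$).

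The only step that needs any thought is the verification that $dh(X_j^s) < 0$ at every stable equilibrium, which requires the alternation of stability types enforced by \textbf{(P2)} together with the hyperbolicity flagged by $A \neq 0$ in \textbf{(P1)}. Once this sign is secured, no new analytic estimate is required: the conclusion is a purely mechanical transcription of Lemma \ref{lemma5.1} applied to $\tilde h$, and both equalities follow simultaneously because the second is an immediate algebraic consequence of the first via the definition of $g$.
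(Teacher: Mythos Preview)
Your proof is correct and follows exactly the route the paper indicates: the paper's entire argument is the single sentence ``a direct consequence of Lemma \ref{lemma5.1}, after the change of coordinates $x\mapsto x-X_j^s$'', and you have simply unpacked that sentence carefully. Your parenthetical remark that, for $j>1$, the quantities $R_\omega(x)/x$ and $g(x)$ must be read in the translated coordinates (i.e., as $\tilde R_\omega(y)/y$ and $\tilde g(y)$) is a genuine clarification of the statement, since the literal ratio $R_\omega(x)/x$ tends to $1$ at any nonzero fixed point; the paper's own comment that the corollary ``provides the numerical value of the slope of the tangent line to $R_\omega$ at $X_j^s$'' confirms that this translated reading is the intended one.
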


Corollary \ref{corollary} provides the numerical value of the slope of the tangent line to $R_\omega$ at $X_j^s$. As shown in Figure \ref{impulsive3}, the next result says the graph of $R_\omega$, when restricted to $[X^s_j, X^s_{j+1}]$, lies between the graph of two linear maps.
\bigbreak

\begin{lemma}
\label{lemma_gamma}
Under Hypotheses \textbf{(P1)--(P3)}, for all $j\in \{1, ..., n\}$ there exist $\beta_j\leq 1 \leq \gamma_j$ such that $$\forall x\in [X^s_j, X^s_{j+1}], \qquad \beta_j x \leq R_\omega(x)\leq \gamma_j x.$$
 
\end{lemma}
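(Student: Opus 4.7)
The plan is to consider the continuous quotient $\phi(x) := R_\omega(x)/x$ on the compact interval $[X^s_j, X^s_{j+1}]$ and let $\beta_j$, $\gamma_j$ be the minimum and maximum values supplied by the Weierstrass extreme value theorem; the bounds $\beta_j \leq 1 \leq \gamma_j$ will then be forced by a sign analysis of $h$ around the intermediate unstable equilibrium $X^u_j$.

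First I would handle continuity of $\phi$. For $j \geq 2$ the denominator satisfies $x \geq X^s_j > 0$, so $\phi$ is automatically continuous on $[X^s_j, X^s_{j+1}]$; and since $R_\omega$ fixes both $X^s_j$ and $X^s_{j+1}$, one has $\phi(X^s_j) = \phi(X^s_{j+1}) = 1$. For $j = 1$ the only possible issue is the $0/0$ indeterminacy at the left endpoint $X^s_1 = 0$, and this is removable by Lemma~\ref{lemma5.1}(2), which gives the finite positive limit $\lim_{x \to 0^+} \phi(x) = \exp(A\omega)$. Extending $\phi$ by this value makes $\phi$ continuous on the full compact interval in every case. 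Weierstrass then furnishes $\beta_j = \min \phi$ and $\gamma_j = \max \phi$, both attained. Positivity $\beta_j > 0$ is immediate because $R_\omega$ is strictly increasing with $R_\omega(0) = 0$, so $R_\omega(x) > 0$ for every $x > 0$, while in the $j = 1$ case we have $\phi(0) = \exp(A\omega) > 0$ as well.

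It remains to verify $\beta_j \leq 1 \leq \gamma_j$. Because consecutive equilibria alternate in stability, the field $h$ has constant sign on each open subinterval between them, and this sign is forced by the type of the endpoints: $h < 0$ on $(X^s_j, X^u_j)$ and $h > 0$ on $(X^u_j, X^s_{j+1})$. Consequently the flow $\varphi_0(t, \cdot)$ of \eqref{3.1} is strictly decreasing in $t$ on the first subinterval and strictly increasing on the second, which yields $R_\omega(x) < x$ (hence $\phi(x) < 1$) for every $x \in (X^s_j, X^u_j)$, and $R_\omega(x) > x$ (hence $\phi(x) > 1$) for every $x \in (X^u_j, X^s_{j+1})$. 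Picking any witness point in each open subinterval gives $\beta_j < 1 < \gamma_j$, which is strictly stronger than required.

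I do not anticipate a serious obstacle. The only mildly delicate ingredient is the removability of the indeterminacy of $\phi$ at the origin when $j = 1$, and this is entirely provided by Lemma~\ref{lemma5.1}(2); everything else amounts to Weierstrass applied to a continuous function on a compact interval, together with the elementary sign analysis of $h$ described above.
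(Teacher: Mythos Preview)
Your proposal is correct and follows essentially the same route as the paper: define the quotient $R_\omega(x)/x$, extend it continuously at $x=0$ via Lemma~\ref{lemma5.1}(2), and apply Weierstrass on the compact interval to obtain $\beta_j$ and $\gamma_j$. The only difference is that the paper simply asserts $\beta_j\le 1\le\gamma_j$ without explanation, whereas you supply an explicit sign analysis of $h$ on $(X^s_j,X^u_j)$ and $(X^u_j,X^s_{j+1})$ to obtain the strict inequalities $\beta_j<1<\gamma_j$; this is a welcome clarification rather than a different method.
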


\begin{proof}
Let us fix   $j\in \{1, ..., n\}$.  The map
$$\dpt x\mapsto \displaystyle{ \left\{\begin{array}{ll}
                                                                  \dpt  \frac{R_\omega(x)}{x} & \text{if $x\neq 0$} \\\\
                                                                   \exp(A\omega) &  \text{if $x=0$}
                                                                 \end{array} \right.}$$
  is continuous on the compact set $[X^s_j, X^s_{j+1}]$ as a consequence of  Lemma \ref{lemma5.1}. In particular, by Weierstrass' Theorem, there exist $ \beta_j\leq 1 \leq \gamma_j$  (see Figure \ref{impulsive3}) such that 
 $$\forall x\in [X^s_j, X^s_{j+1}], \qquad \beta_j  \leq \frac{R_\omega(x)}{x}\leq \gamma_j,$$
 where 
 $$
 \dpt \beta_j=\min_{x\in [X^s_j, X^s_{j+1}],}\frac{R_\omega(x)}{x} \qquad \text{and}\qquad \dpt \gamma_j=\max_{x\in [X^s_j, X^s_{j+1}],}\frac{R_\omega(x)}{x}.$$
Now the lemma follows.
 \end{proof}

\begin{figure}
\begin{center}
\includegraphics[height=7.5cm]{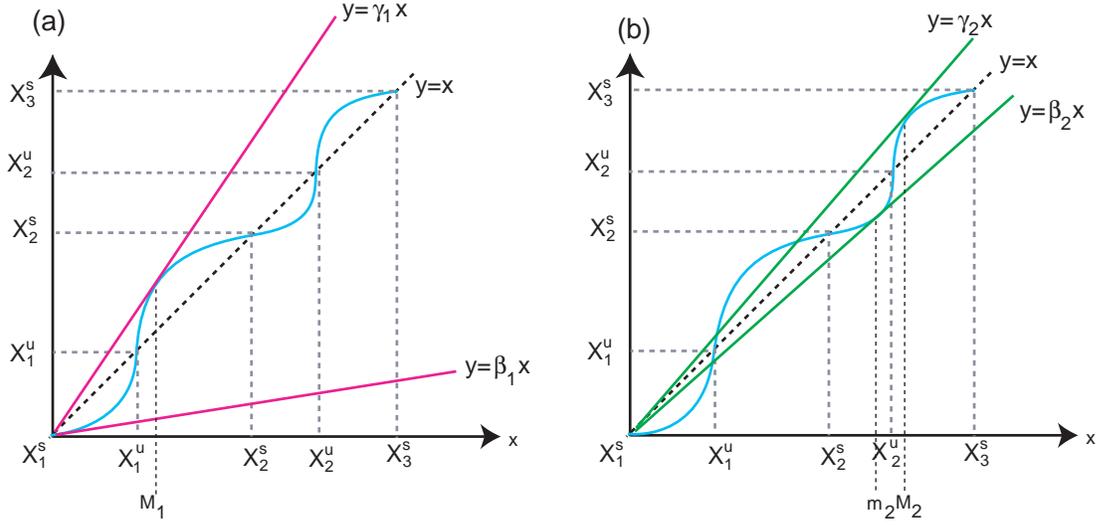}
\end{center}
\caption{\small  Illustration of Lemma \ref{lemma_gamma}: the graph of $R_\omega$, when restricted to $[X^s_j, X^s_{j+1}]$, lies between the graph of two linear maps, where $j\in \{1, ..., n\}$. (a): $j=1$. (b): $j=2$. }
 \label{impulsive3}
\end{figure}

 First of all, as a consequence of the  geometry  of the graph of $R_\omega$ stated in \textbf{(P4)} and \textbf{(P5)}, observe that $$m_1=X^s_1\equiv 0, \quad M_j \in [X^u_j, X^s_{j+1}[ \quad \text{and} \quad m_j \in [X^s_j, X^u_{j}[.$$

\begin{lemma}
\label{lemma_sign}
Under Hypotheses \textbf{(P1)--(P5)}, for all $j\in \{1, ..., n\}$, the following assertions hold:\\
 \begin{enumerate}
 \item $R_\omega'(x)> R_\omega(x) /x$ for $x\in \, ]m_j, X^u_{j}[$;\\
 \item $R_\omega'(x)> R_\omega(x) /x$ for $x\in \,  ]X^u_j, M_j[$;\\
  \item $R_\omega'(x) < R_\omega(x) /x$ for $x\in \, ]M_j, m_{j+1}[$;\\
 \item $\dpt R_\omega'(m_j)=\frac{R_\omega(m_j)}{m_j}$ and $\dpt R_\omega'(M_j)=\frac{R_\omega(M_j)}{M_j}$. \\
 \end{enumerate}
\end{lemma}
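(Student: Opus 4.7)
The approach is to reduce everything to studying a single auxiliary function. Define $\phi(x) := R_\omega(x)/x$ for $x > 0$. A direct computation gives $\phi'(x) = (R_\omega'(x) - R_\omega(x)/x)/x$, so for positive $x$ the sign of $R_\omega'(x) - R_\omega(x)/x$ that appears in items (1)--(3) coincides with the sign of $\phi'(x)$, while item (4) becomes the statement that $\phi'$ vanishes at $m_j$ and $M_j$. Hence the whole lemma reduces to tracking the sign of $\phi'$ along the decomposition of $[X_j^s, X_{j+1}^s]$ furnished by $m_j$, $X_j^u$ and $M_j$.

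I would work with $g(x) := xR_\omega'(x) - R_\omega(x) = x^2\phi'(x)$, which has the same sign as $\phi'$ but is easier to differentiate, since $g'(x) = xR_\omega''(x)$. By Property \textbf{(P4)} this forces $g$ to be non-decreasing on each convex sub-interval $]X_j^s, X_j^u[$ and non-increasing on each concave sub-interval $]X_j^u, X_{j+1}^s[$. Evaluating $g$ at the equilibria, using $R_\omega(X_j^\star) = X_j^\star$ and the values $R_\omega'(X_j^\star) = \exp(dh(X_j^\star)\omega)$ supplied by Corollary \ref{corollary}, one gets $g(X_j^s) = X_j^s(\exp(dh(X_j^s)\omega) - 1) \leq 0$ (strictly negative for $j \geq 2$, with $g(X_1^s) = g(0) = 0$ treated separately) and $g(X_j^u) = X_j^u(\exp(dh(X_j^u)\omega) - 1) > 0$.

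The next step is to locate the zeros of $g$ and assemble the sign table. On the convex piece $]X_j^s, X_j^u[$, the monotone $g$ moves from a non-positive value to a positive value, so it possesses at least one zero there; hypothesis \textbf{(P5)} (uniqueness of the minimizer of $\phi$) promotes this to exactly one zero and identifies it with $m_j$, because every zero of $g$ is a critical point of $\phi$. A symmetric argument on the concave piece identifies the unique zero of $g$ on $]X_j^u, X_{j+1}^s[$ with $M_j$. Reading off the signs yields $g > 0$ on $]m_j, X_j^u[$ and on $]X_j^u, M_j[$, proving (1) and (2), and $g < 0$ on $]M_j, X_{j+1}^s[$. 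To extend (3) all the way to $m_{j+1}$, I would repeat the convex-piece analysis on $]X_{j+1}^s, X_{j+1}^u[$: there $g$ is non-decreasing, starts at $g(X_{j+1}^s) < 0$ and first reaches zero at $m_{j+1}$, so $g$ stays strictly negative on $]X_{j+1}^s, m_{j+1}[$. Item (4) then comes for free, because $m_j$ and $M_j$ were just identified as zeros of $g$.

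The main obstacle I anticipate is the degenerate boundary case $m_1 = X_1^s = 0$: since $\phi$ is not literally defined at $0$, the equality $R_\omega'(m_1) = R_\omega(m_1)/m_1$ of item (4) must be interpreted via the limit $R_\omega'(0) = \lim_{x\to 0^+} R_\omega(x)/x = \exp(A\omega)$ from Corollary \ref{corollary}, and the sign statement in (1) for $j=1$ requires working on $]0, X_1^u[$ with $g(0)=0$ rather than $g(m_1) < 0$. A secondary subtlety is that Property \textbf{(P4)} gives only weak convexity/concavity, so $g'$ might vanish on a whole sub-interval; the strict inequalities in (1)--(3) then need \textbf{(P5)} again to exclude the possibility that $g \equiv 0$ on some sub-interval of $]m_j, X_j^u[$, $]X_j^u, M_j[$ or $]M_j, m_{j+1}[$, which would force $\phi$ to be constant there and so contradict the uniqueness of its extrema.
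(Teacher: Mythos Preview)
Your approach is correct and takes a genuinely different route from the paper's. You package everything into a single auxiliary function $G(x) := xR_\omega'(x) - R_\omega(x)$ (rename it to avoid clashing with the paper's $g$ of \eqref{def: g}), observe that $G'(x) = xR_\omega''(x)$ inherits its sign directly from \textbf{(P4)}, evaluate $G$ at the equilibria via the linearisation $R_\omega'(X^\star) = \exp(dh(X^\star)\omega)$, and then use \textbf{(P5)} to collapse each zero set of $G$ to a single point identified with $m_j$ or $M_j$. The paper instead treats each item by a separate ad hoc argument: for (1) it first gets the weak inequality from the convex secant inequality $R_\omega'(x) \geq (R_\omega(x)-R_\omega(0))/x$, and then upgrades to strict by showing that equality at some $x_0$ would force $R_\omega$ to be linear on $[0,x_0]$, contradicting $H\neq 0$ in \textbf{(P1)}; for (2) it assumes failure at $x_0$, integrates $R_\omega' \leq R_\omega(x_0)/x_0$ up to $M_1$ via the Fundamental Theorem of Calculus, and reaches $\gamma_1 \leq R_\omega(x_0)/x_0$, contradicting the uniqueness in \textbf{(P5)}. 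Your method is more uniform and sidesteps the paper's somewhat delicate step that a linear time-$\omega$ map must come from a linear vector field; the paper's arguments, by contrast, stay closer to elementary calculus and never need the value of $R_\omega'$ at the unstable equilibria. One small citation issue: the identity $R_\omega'(X_j^u) = \exp(dh(X_j^u)\omega)$ is correct but is not literally supplied by Corollary~\ref{corollary}, which only treats the stable equilibria; it follows from the standard variational equation for the flow.
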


\begin{proof}
We perform the proof for $j=1$. For   $j>0$, the proof is analogous (with the necessary adaptations).\\
 \begin{enumerate}
 \item  For $x\in\,  ]m^s_1, X^u_1[$, the map $R_\omega$ is convex (by \textbf{(P4)}) and $R_\omega(0)=R_\omega(X^s_1)=0$. It is easy to check that:
 $$
R_\omega'(x)\geq \frac{R_\omega(x)-R_\omega(0)}{x-0}= \frac{R_\omega(x)}{x} 
 $$
 We want to show that, for all $x\in\,  ]m^s_1, X^u_1[$, we have:
 \begin{equation}
 \label{claim2}
 \dpt R_\omega'(x)> \frac{R_\omega(x)}{x}.
 \end{equation}
  Suppose, by contradiction, there exists $x_0\in\,  ]m^s_1, X^u_1[$ such that
   \begin{equation}
 \label{claim2a}
  \dpt R_\omega'(x_0)= \frac{R_\omega(x_0)}{x_0}.
  \end{equation}
   Since $R_\omega$ is increasing as a consequence of \textbf{(P4)}, then for all $x\in [m_1, x_0[$, we get:
\begin{equation}
\label{absurd1}
 \frac{R_\omega(x)}{x}\overset{\text{convexity}}\leq R_\omega'(x)\overset{R_\omega \text{ is decreasing}}\leq R_\omega'(x_0)\overset{\eqref{claim2a}}=\frac{R_\omega(x_0)}{x_0}.
 \end{equation}

 \begin{claim}
 \label{claim3}
 For all $x\in [m_1, x_0[$, the following equality holds:
\begin{equation}
\label{claim1}
 \frac{R_\omega(x)}{x}= \frac{R_\omega(x_0)}{x_0}
 \end{equation}
 \end{claim}
 
  \emph{Proof of Claim \ref{claim3}:}
  Suppose, by contradiction, that  Claim \ref{claim3} is false, \emph{i.e.} there exists $z  \in [m_1, x_0[$ such that $\dpt \frac{R_\omega(z)}{z}<\frac{R_\omega(x_0)}{x_0}$.     By the  Mean Value Theorem, there exists $y_0 \in\,  ]z, x_0[$ such that:
 
 \begin{eqnarray*}
 R'_\omega(y_0) &=& \frac{R_\omega(x_0)-R_\omega(z)}{x_0-z}\\ \\ 
 & > & \frac{R_\omega(x_0)-\frac{R_\omega(x_0)z}{x_0}}{x_0-z}\\ \\ 
 &= & R_\omega(x_0) \left( \frac{1-\frac{z}{x_0}}{x_0-z}\right)\\ \\ 
  &= & \frac{R_\omega(x_0)}{x_0},\\ 
  \end{eqnarray*}
 contradicting \eqref{absurd1} (note that  $m_1\leq z<y_0<x_0$). Therefore, Claim \ref{claim3} is proved.
 \bigbreak

Equality \eqref{claim1} is equivalent to 
 $
\dpt  {R_\omega(x)}=\frac{R_\omega(x_0)}{x_0} x, 
 $
 for $x\in [m_1, x_0[$. 
 In particular, near $x=0$, we may write:
 $$
 {R_\omega(x)}=\frac{R_\omega(x_0)}{x_0} x\overset{\text{Lemma \ref{lemma5.1}}}= \exp(A\omega)x
 $$
 which is the time--$\omega$ of the solution of the Malthus Law ($A=dh(X_1^s)=dh(0)$):
$$
 \left\{
\begin{array}{l}
\dot{x}=Ax \\ \\
\varphi(0, x)=x \in \RR^+_0
\end{array}
\right.
$$
However,  $\varphi_\lambda$ is the solution of \eqref{general} which is not linear (by \textbf{(P1)} we have $H(x)\neq 0$). This is a contradiction. Therefore, Equality \eqref{claim2}  is shown. \\ \\
 \item Suppose by contradiction that  $R_\omega'(x_0)\leq R_\omega(x_0) /x_0$ for a given $x_0\in\,  ]X^u_1,M_1[$. 
 Since   $R_\omega$ is concave ($\Leftrightarrow$ $R'_\omega$ is decreasing by \textbf{(P4)}), we may write that, for $x>x_0$:
 $$
 R_\omega'(x)\overset{R_\omega' \text{is decreasing }}\leq   R_\omega'(x_0) \overset{\text{contradiction}}\leq R_\omega(x_0)/x_0.
 $$
 In particular, for $y \in \, ]x_0, M_1]$, we may use the Fundamental Theorem of Calculus and we conclude that:
 
 \begin{eqnarray*}
 R_\omega(y)&=&R_\omega(x_0) +\int_{x_0}^y R_\omega'(x) dx\\
 &\leq &R_\omega(x_0) +\int_{x_0}^y \frac{R_\omega(x_0)}{x_0} dx\\
 &= &R_\omega(x_0) +  \frac{R_\omega(x_0)}{x_0}y -\frac{R_\omega(x_0)}{x_0}x_0\\
   &= & \frac{R_\omega(x_0)}{x_0}y \\
    \end{eqnarray*} 
yielding
$
R_\omega(M_1)\leq \frac{R_\omega(x_0)}{x_0}M_1.
$
 Since $R_\omega(M_1)=\gamma_1M_1$ (remind the meaning of $\gamma_1$ in Lemma \ref{lemma_gamma}) we get
 $$
\gamma_1 \leq \frac{R_\omega(x_0)}{x_0}
 $$
which means (using  \textbf{(P5)}) that $x_0=M_1$, which is a contradiction because $x_0<M_1$.  \\
 \item Similar to (2). \\
 
 \item It follows from the definition of $m_j$ and $M_j$ in the proof of Lemma \ref{lemma_gamma}.

 \end{enumerate}
\end{proof}

\bigbreak
 
\section{Proof of the main results}
\label{s:main proof}

\subsection{Proof of Theorem \ref{ThA}}

\begin{proof}
\begin{enumerate}
\item
Suppose that $\varphi(t, x_0)\in \RR^+$ is a $\omega$--periodic solution of $\eqref{general}$. On the one hand, we have $$\forall t\in \RR,\quad  \varphi_\lambda(t+\omega, x_0)= \varphi_\lambda(t, x_0),$$  which means that 
$$
\varphi_\lambda(\omega, x_0)= \varphi_\lambda(0, x_0)=x_0.
$$
 On the other hand, we may write (see \eqref{important1}):
$$
R_\omega (x_0)\overset{\text{Def.}}=\varphi_0(\omega, x_0)= \frac{\varphi_\lambda(\omega, x_0)}{1+\lambda}\overset{\varphi_\lambda \text{ is periodic}}= \frac{x_0}{1+\lambda}.
$$

Hence, the number of $\omega$--periodic orbits  is the number of the intersection points of of graphs of $R_\omega$ and  $\dpt \ell(x)= \frac{x}{\lambda+1} $.\\

\item    
If $x_0\in \RR^+$  is such that $\varphi_\lambda(t, x_0)$ is a $\omega$--periodic solution of (\ref{general}), then
$$
R_\omega (x_0)=  \frac{x_0}{1+\lambda}
$$
which is equivalent to
$$
\lambda = \frac{x_0}{R_\omega (x_0)}-1= :g(x).\\
$$
\bigbreak 
\item It is a consequence of the way the map $g$ has been constructed in (2). \\ 
\item  We prove the first case; the other is analogous. If $x_0\in \RR^+$ corresponds to a $\omega$--periodic solution associated to $\lambda^\star>-1$ then :

\begin{equation}
\label{eq_star} 
\lambda^\star = \frac{x_0}{R_\omega (x_0)}-1.
\end{equation}
 Differentiating $g$ at $x_0\neq 0$ we get:
\begin{equation}
\label{diff1}
g'(x_0)= \frac{R_\omega(x_0)-x_0R_\omega'(x_0)}{R_\omega^2(x_0)},
\end{equation}
whose signonly depends on the sign of the numerator $R_\omega(x_0)-x_0R_\omega'(x_0)$ fully characterised in Lemma \ref{lemma_sign}.
\medbreak

Suppose that $x_0$ is such that $g'(x_0)>0$.  This means that   $g$ is increasing near $x_0$ and $R_\omega'(x_0)<R_\omega(x_0)/x_0$. By Lemma \ref{lemma_sign} it means that $x\in \, ] M_j, m_{j+1}[$, for some $j\in \{1, ..., n\}$.
  In particular for $x>x_0$, we have:
 \begin{eqnarray*}
  g(x)>g(x_0) &\Leftrightarrow& \frac{x}{R_\omega(x)}-1> \frac{x_0}{R_\omega(x_0)}-1 \\ \\
  &\overset{\eqref{eq_star}}\Leftrightarrow& \frac{x}{R_\omega(x)}-1> \lambda^\star\\ \\
  &\Leftrightarrow& \frac{x}{R_\omega(x)}> 1+\lambda^\star\\ \\
   &\Leftrightarrow& {R_\omega(x)}< \frac{x}{1+\lambda^\star} \\ \\
   &\Leftrightarrow& (1+\lambda^\star) {R_\omega(x)}< x\\ \\
      &\Leftrightarrow&    \varphi_\lambda(\omega, x)<\varphi_\lambda(0, x)\\ \\
      &\overset{\text{Lemma \ref{lemma_monotony}}}\Leftrightarrow&   \text{ $\left(\varphi_\lambda(T_k,x)\right)_{k\in \NN}$ is monotonically decreasing} \\
      \end{eqnarray*} 
    
Since the sequence $\left(\varphi_\lambda(T_k,x)\right)_{k\in \NN}$ is monotonic decreasing with lower boundary $x_0$, then $\varphi_\lambda(T_k, x)$ should converge to the $\omega$--periodic solution associated to $x_0$ ($\Leftrightarrow$ $\varphi_\lambda(t, x_0)$ is asymptotically stable, by definition of Subsection \ref{ss: definitions}). The case where $g'(x_0)<0$ has a similar proof.

\end{enumerate}
\end{proof}

\subsection{Proof of Corollary \ref{corol}}
\label{Corol_prof}
In the space of $C^1$--maps endowed with the $C^1$--Whitney topology, the map $R_\omega$ is diffeotopic to the Identity. In other words,
 we have $\dpt \lim_{\omega \rightarrow 0} R_\omega =_{C_1}  Id$ where $Id$ denotes the Identity map in $\RR$.  Therefore if $Y^\star\neq 0$ is a $\omega_1$--periodic solution of \eqref{general} associated to $x_0\in \RR^+$, then it corresponds to an intersection of the graphs of  $\ell $ and $R_\omega$ (different from the origin). Since  $\dpt \lim_{\omega \rightarrow 0}\frac{x}{R_{\omega}(x)}-1=\mathbf{0}$, then the graph of $g$ converges to the null map $\mathbf{0}$, ruling out  $\lambda$--values for possible intersection.   Corollary \ref{corol} is proved.


\begin{figure}
\begin{center}
\includegraphics[height=7.5cm]{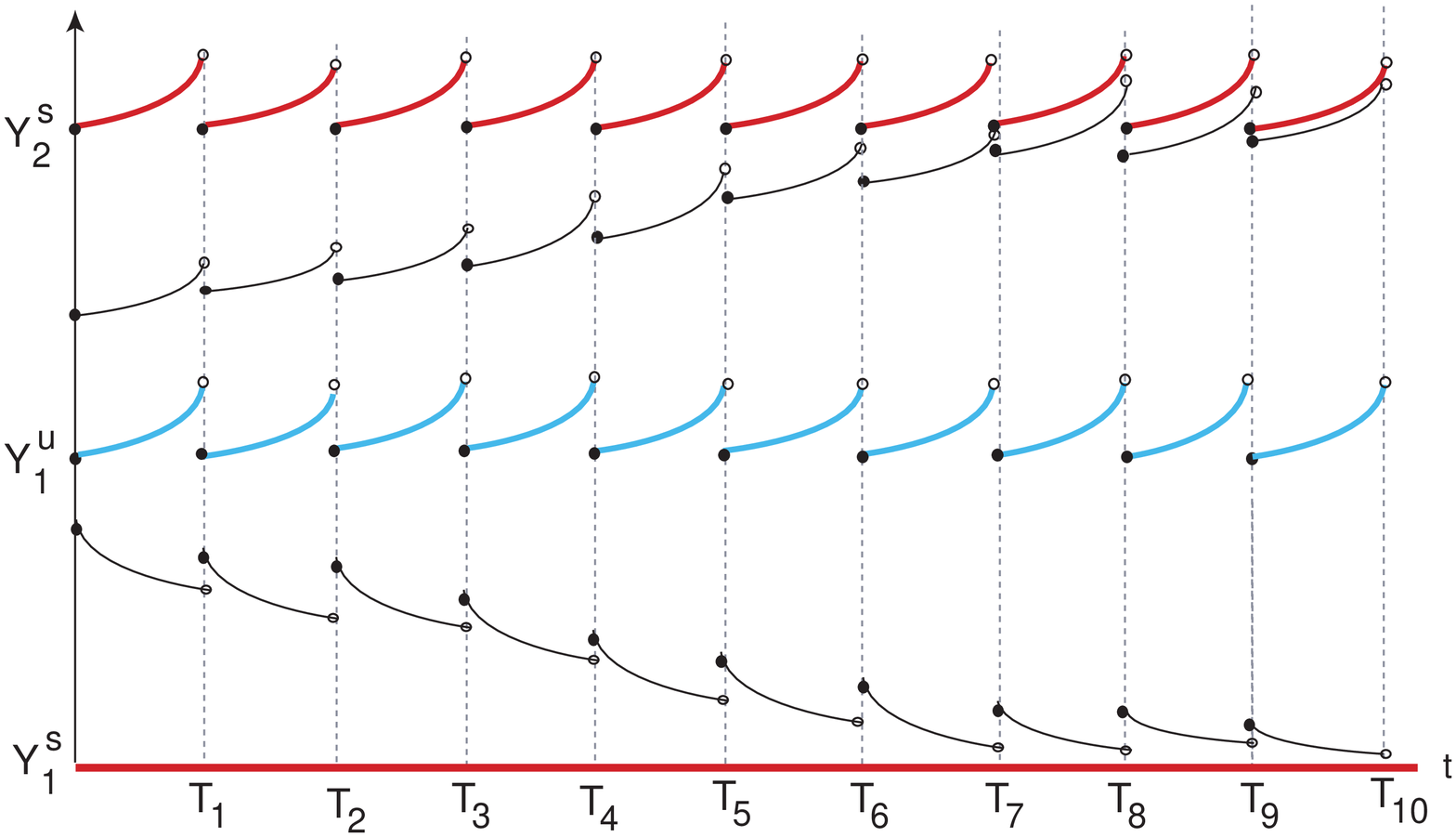}
\end{center}
\caption{\small  Sketch of three $\omega$--periodic solutions for the scenarios 2 and 4 of Table 1: two stable (blue) and one unstable (red).  }
 \label{impulsive5}
\end{figure}

 

 

\begin{figure}
\begin{center}
\includegraphics[height=18.5cm]{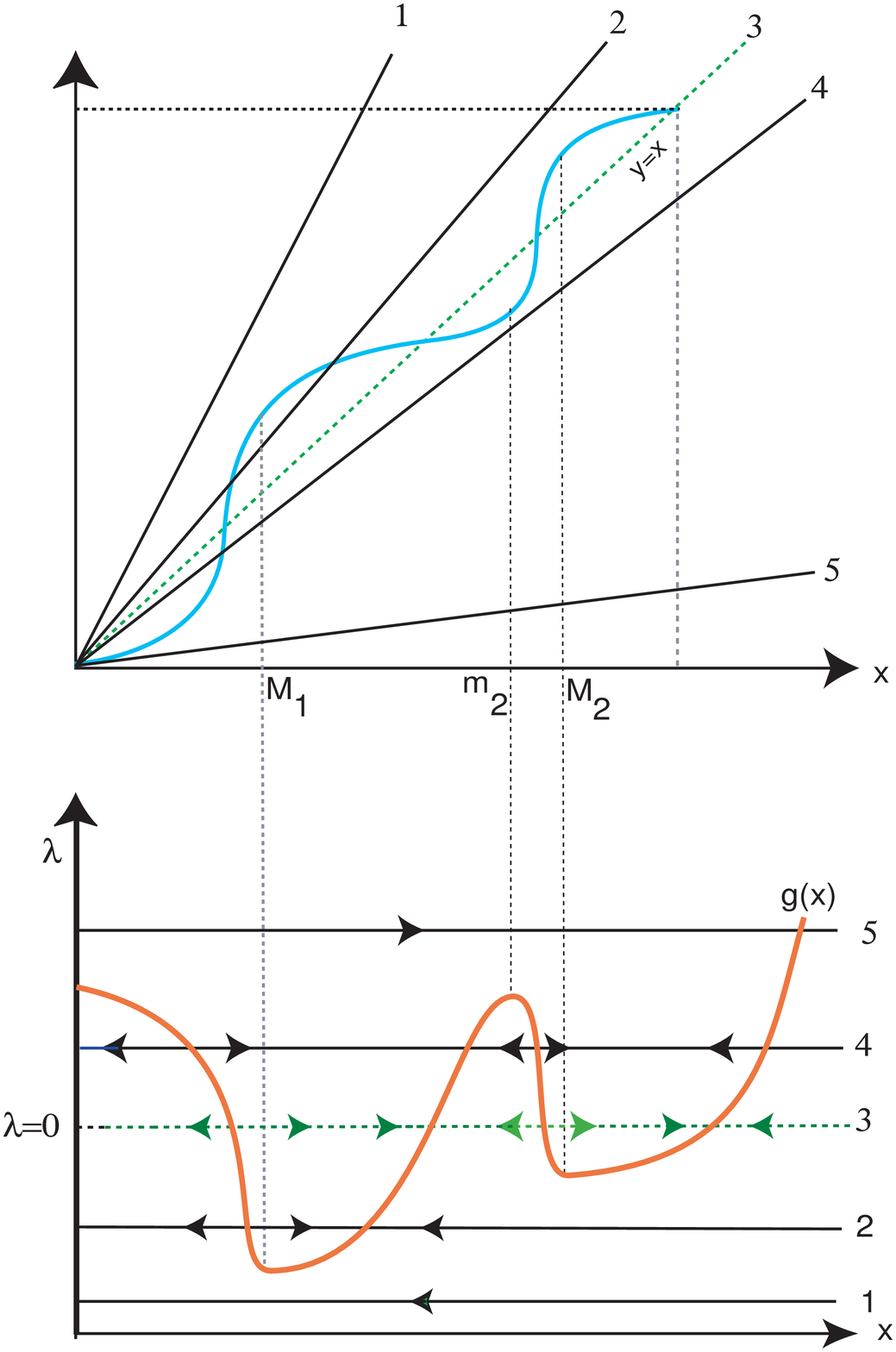}
\end{center}
\caption{\small  Geometrical interpretation of  Theorem \ref{ThA}. Possible intersections of $\ell$ and $R_\omega$; for each intersection point, we may decide about its stability by observing the monotony of $g$. For $j\in \{1, 2, 3, 4, 5\}$, the line $j$ is the graph of $\dpt \frac{x}{1+\lambda_j}$, where $-1<\lambda_j<\lambda_{j+1}$. The value of $\lambda_3$ is $0$ (its  dynamics corresponds to that of \eqref{3.1}. Arrows in the lower scheme indicate the stability of the periodic solutions. Compare the values with those of Table 1.}
 \label{impulsive4}
\end{figure}
   \begin{table}[htb]
\begin{center}
\begin{tabular}{|c|c|c|c|} \hline 
Line Reference & Interval of $\lambda$ & {Number of equilibria}  & Stability of Equilibria   \\
in Figure \ref{impulsive4}&&of \eqref{general}&of \eqref{general} (ordered) \\
\hline 
&&&\\
1&$\dpt \lambda<\frac{1}{\gamma_1}-1$ & 1&  $0=Y^s_1$ \\ &&&\\ \hline  
&&&\\ 
2& $\dpt \frac{1}{\gamma_2}-1<\lambda<\frac{1}{\gamma_1}-1$ & 3& $0=Y^s_1, Y^u_1, Y^s_2$  \\ &&&See Figure \ref{impulsive5} \\ &&& \\\hline  
&&&\\
3&$\dpt \frac{1}{\gamma_1}-1<\lambda<\frac{1}{\beta_2}-1$ & 5& $0=Y^s_1, Y^u_1, Y^s_2, Y^u_2, Y^s_3$ \\ &&&\\ \hline  
&&&\\ 
4&$\dpt \frac{1}{\gamma_1}-1<\lambda<\exp(-A\omega)-1$ & 3& $0=Y^s_1, Y^u_1, Y^s_2$   \\ &&&See Figure \ref{impulsive5} \\&&& \\\hline  
&&&\\
5&$\dpt \lambda>\exp(-A\omega)-1$ & 2&  $0=Y^u_1, Y^s_1$  \\ &&&\\ \hline  

 \hline
\end{tabular}
\end{center}
\label{notation2}
\bigskip
\caption{Qualitative dynamics of \eqref{general} for $k=5$, $n=3$, $A<0$, $\gamma_1>\gamma_2$ and $\beta_2>\exp(-A\omega)-1$, where $A=dh(0)=dh(X^s_1)$.   }
\end{table}

\section{Discussion and Final Remark}
\label{s:discussion}

In this article, we have  considered non-linear differential equations in $\RR$ with a finite number of hyperbolic equilibria, which are subject to $\omega$--periodic instantaneous impulses ($\omega>0$). 
The only non-constant recurrent solutions of our system are $\omega$--periodic ones.
 We present a cinematic algorithm to find the $\omega$--periodic solutions of the perturbed system by intersecting the graphs of two real valued maps: $\ell$ and $R_\omega$, where the latter is the time-$
\omega$ map of the unperturbed system.

In almost all cases, the map $R_\omega$ cannot be obtained explicitly. Nonetheless, it is a solvable numerical problem, from where we may derive the auxiliary map $g$; then  we are able to study the stability of the periodic solutions as a consequeence of   Theorem \ref{ThA}.
Furthermore, it suffices to determine  the initial values of the periodic solutions and their stability to conclude about the asymptotic behaviour of all solutions of \eqref{general}, as noticed in Table 1 for for $k=5$,  $A<0$, $\gamma_1>\gamma_2$ and $\beta_2>\exp(-A\omega)-1$.

 
As a consequence of the proof of Theorem \ref{ThA}, we get that if $x_0\in \RR^+$ is such that $g'(x_0)=0$, then $\varphi_\lambda(t, x_0)$ is a \emph{saddle-node bifurcation}: slight smooth perturbations on $\lambda$ give rise either to zero or two periodic solutions of different stability. In the case under consideration in Table 1 ($A<0, k=5$), this occurs when
$$
\lambda= \frac{1}{\gamma_1}-1, \quad \lambda= \frac{1}{\beta_2}-1\quad \text{and}\quad \lambda= \frac{1}{\gamma_2}-1.
$$ 
Moreover, at $\lambda= \exp(-A\omega)-1$, the origin undergoes a \emph{transcritical bifurcation}: there is an exchange of stabilities between two equilibria. The analysis of these bifurcations come directly from  Table 1 and  Figure \ref{impulsive4}.\\

  Hypotheses \textbf{(P1)--(P5)} hold for a generic system of differential equations in $\RR$, in the sense that they are valid in a \emph{residual} within the set of $C^2$ maps. 
  Hypotheses \textbf{(P4)--(P5)} simplify the proof and can be relaxed in the following way:\\
  \begin{itemize}
  \item For each $j\in \{1, ..., n\}$, the change of sign of $R'_\omega$ should occur (at least once) within $[X^s_j, X^s_{j+1}]$. Hypothesis  \textbf{(P4)} states  that it happens at $X=X^u_j$, which is not a loss of generality; another value would be possible and the proof would run along the same lines (up to the necessary changes in Lemma \ref{lemma_sign});\\
  \item   Hypothesis \textbf{(P5)} simplifies the proof of Theorem \ref{ThA} and may be relaxed. If \textbf{(P5)} is removed, then the graph of $g$ will have some intervals where it is constant. \\
  \end{itemize}
  
The natural problem about differential equations with instantaneous pulses is the  generalisation the two-dimensional results of Example 2 of \cite{Simeonov_paper88} using the techniques of \cite{Li, Shuai_2007}, a task deferred to a future work.

\end{document}